\documentclass[smallextended]{svjour3}       
%
%

%
\usepackage{graphicx}
\usepackage[symbol]{footmisc}
\usepackage[misc]{ifsym}
\usepackage{hyperref}
\usepackage{amssymb}
\usepackage{latexsym,amscd}
\usepackage{color}
\usepackage{slashed}
%

  \newcommand{\tr}{\mathrm{Tr}}

  \newcommand{\beq}{\begin{equation}}
  \newcommand{\eeq}{\end{equation}}




\newcommand{\OM}{\Omega}

\newcommand{\DEL}{\Delta}

\newcommand{\la}{\lambda}

\newcommand{\RR}{\mathbb{R}}
\newcommand{\BR}{\mathbb{R}^{n}}
\newcommand{\CC}{\mathbb{C}}

\newcommand{\lka}{\langle}
\newcommand{\rka}{\rangle}







\renewcommand{\d}{\textrm{\rm d}}
\newcommand{\e}{\textrm{\rm e}}


\tolerance=1
\emergencystretch=\maxdimen
\hyphenpenalty=10000
\hbadness=10000

\topmargin=0.00cm


\begin{document}

\title{Some domination inequalities for spectral zeta kernels on closed Riemannian manifolds}



\author{Louis Omenyi\thanks{Corresponding author: Louis Omenyi, \\ Email: omenyi.louis@funai.edu.ng}$^{1, \href{https://orcid.org/0000-0002-8628-0298}{ID}}$         \and
        McSylvester  Omaba$^{2}$ 
}


\institute{$^{\textrm{\Letter}}$ Louis Omenyi \at
              Department of Mathematics/Computer Science/Statistics/Informatics,\\
              Alex Ekwueme Federal University, Ndufu-Alike, Nigeria \\
              \email{omenyi.louis@funai.edu.ng}           
           \and
           McSylvester Omaba \at
              Department of Mathematics, College of Science, University of Hafr Al Batin, 
 P. O Box 1803 Hafr Al Batin 31991, KSA
}

\date{Received: date / Accepted: date}

\maketitle

\begin{abstract} 

We first prove Kato's inequalities for the Laplacian and a Schr$\ddot{o}$dinger-type 
operator on smooth functions on closed Riemannian manifolds. We then apply the result to establish
some new domination inequalities for spectral zeta functions and their related spectral zeta kernels 
on $n$-dimensional unit spheres using Kato's inequalities and majorisation techniques. Our results 
are the  generalisations of Kato's comparison inequalities for Riemannian surfaces 
 to $n$-dimensional closed Riemannian manifolds.
\keywords{Riemannian manifold; Kato's inequality; majorisation; 
Laplacian; Schr$\ddot{o}$dinger-type operator; spectral zeta kernel}
\end{abstract}
\textbf{\emph{MSC 2010:}} Primary: 58Jxx; Secondary: 57N16.

\section{Introduction}
A  generalisation of the  Riemann zeta function is the spectral 
zeta function, which is  the function of interest in this paper. 
We denote  the Laplace-Beltrami operator simply called the  Laplacian in many literature 
 acting on smooth functions on a Riemannian manifold $(M,g)$
 by $\Delta _{g}.$  The Laplacian defined locally on closed and connected  
 Riemannian  $n$-dimensional Riemannian manifold $(M,g)$ by  
\begin{equation}\label{kat2}
\Delta _{g} = - ~ \mathrm{div}(\mathrm{grad}) = - \frac{1}{\sqrt{|g|}} 
\sum _{i,j} \frac{\partial}{\partial x^{i}} \big( \sqrt{|g|}
 g^{ij} \frac{\partial}{\partial x^{j}} \big)
\end{equation}
where $g^{ij}$ are the components of  the dual metric  on the cotangent
 bundle $T^{\ast}_{x} M$ is a non-negative operator on smooth functions.
 
The operator $\Delta _{g}$ extends to a self-adjoint operator on 
$L^{2} (M) \supset H^{2} (M) \rightarrow L^{2} (M) $ with compact resolvent. This implies that there
 exists an orthonormal basis $\psi _{k} \in L^{2}(M)$ consisting of eigenfunctions such that 
\begin{equation}\label{kat3}
\DEL _{g}  \psi_{k} = \la _{k} \psi_{k}
\end{equation}
where the  eigenvalues are listed with multiplicities.
  
It has the discrete spectrum $\{ \la_k \}_{k=1}^\infty$ listed with multiplicities. 
For details one may see for  example \cite{AG}, \cite{ICH}, \cite{SM} and 
 \cite{OM19} among many literature. The Laplacian $\Delta _{g}$ thus, has one-dimensional 
 null space consisting precisely of constant functions.  
 
Consequently, we define the spectral zeta function of the Laplacian on smooth functions 
on closed Riemannian manifolds by 
\begin{equation}\label{RZ00}
\zeta _{g} (s) = \sum _{k = 1} ^{\infty}
 \frac{1}{\la _{k} ^{s}}~;~  ~~~\Re (s) >\frac{n}{2}.
\end{equation}

Kato in \cite{Kato2} in effort to prove essential self-adjointness for 
Schr$\ddot{o}$dinger operators under very mild restrictions on the potential term  
introduced Kato's inequality and the Kato class of potentials 
that were  combined to give new insight in analysis and geometry of Riemannian manifolds. 
It computed explicit majorisation Kato's inequalities for the trace of the heat operator 
and the semigroup generated by Bochner Laplacian on forms. 

A characterisation of the generators of positive semigroups were done in \cite{Are}. 
It showed that the semigroup consists of positive operators  if and only if it satisfies 
the abstract version of the kato's inequality. It also characterised the domination of 
semigroups by an inequality for their generators. In continuation of the characterisation, 
Hess, et al \cite{HSU} constructed Kato's inequalities for several Laplacians on 
tangent bundles of some Riemannian manifolds and studied their semigroups dominations. 
 
B$\ddot{a}$r \cite{BAR2} used Kato's comparison principle for heat semi-groups to derive 
estimates for trace of the heat operator on surfaces with variable curvature. These 
estimates is from above for positively curved surfaces of genus $0$ and from below for genus 
greater or equal to $2.$ It was shown that the estimates are asymptotically sharp for 
small time and in the case of positive curvature also for large time. As a consequence 
it estimated the corresponding spectral zeta function by the Riemann zeta function for the 
surfaces. Specifically, B$\ddot{a}$r \cite{BAR2} derived bounds of the Laplace spectrum 
on closed oriented surfaces. 

In this paper, we extend these results by constructing Kato's bounds for the spectral
 zeta kernel of a perturbed Laplacian of the Schr$\ddot{o}$dinger-type in comparison 
 with spectral zeta kernel of the Laplacian on closed Riemannian manifolds. Our results 
 show that the trace of the operator $\exp (-t H)$  where $H = \Delta _{g} + V$ for smooth 
 potential $V$ is majorised  by the trace of the heat operator heat operator 
 $\exp (-t \Delta _{g})$ on closed and connected  Riemannian manifold $(M,g).$ 
 We used Kato's inequality to prove that the spectral zeta kernel of $H$ is 
 majorised by that of $\Delta _{g}$ on the $n$-dimensional unit sphere.

\section{Preliminaries}
 Consider a Schr$\ddot{o}$dinger-type operator $H = \Delta _{g} +  V$ for some 
 densely defined potential operator $V$ in $L^{2}(M),$  for example, on a smooth tangent bundle 
 $T_{x}M$ of a closed Riemannian manifold $(M,g)$ of dimension $n.$ 
  
 The majorization (domination) of the traces of the semi-group $\e ^{-t H}$ by  $\e ^{-t \Delta _{g}},$  namely
 \begin{equation}\label{kat7}
 \tr \e ^{-t H} \leq  n  \tr \e ^{-t \Delta }, ~~~ t> 0
 \end{equation}
 see for example \cite{Hess} and references therein  leads to the comparison of 
 the spectra of the generators 
 $H$ and $\Delta$ on $M.$  The inequality  (\ref{kat7}) of course yields inequalities 
 for the associated Riemann zeta as well as the spectral zeta functions and the associated 
 spectral zeta kernel on manifolds of certain dimensions. 
 
Limits and integrals; sum and integrals  will be  switched in this paper using the 
dominated convergence and Fubini - Tonelli theorems stated as Theorem (\ref{r0})  
and Theorem (\ref{r1}) below.
\begin{theorem}\label{r0}\cite{RS1}. 
Let $\Omega \subseteq \BR$ be open and let $\{\psi _{k}\}$ be a sequence of 
 integrable functions on  $\Omega .$  Suppose that 
 $\displaystyle{\lim _{k \rightarrow \infty}  \psi _{k} (x) =  \psi (x)}$ $\mu $-almost everywhere.
Further suppose that there exists $\omega \geq 0$ with 
$\displaystyle{\int _{\Omega} \omega (x) \d \mu (x)   < \infty }$ such that 
${\psi _{k}(x) \leq \omega (x) ~~ \forall k} .$ 
Then $\displaystyle{\psi (x) \leq \omega (x) ~~ \mu }$-almost everywhere and 
$\displaystyle{\lim _{k \rightarrow \infty} \int _{\Omega} \psi _{k} (x) 
\d \mu (x) =  \int _{\Omega} \psi (x) \d \mu (x) ;}$ 
where $\d \mu (x)$ is the measure form on $\Omega .$
\end{theorem}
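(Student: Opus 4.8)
The statement is a form of the dominated convergence theorem, so the natural route is through Fatou's lemma, which I would take as the single genuinely measure-theoretic input (citing \cite{RS1} for it). First I would dispose of the pointwise conclusion: since $\psi_k(x) \le \omega(x)$ for every $k$ and $\psi_k(x) \to \psi(x)$ for $\mu$-almost every $x$, passing to the limit in the inequality gives $\psi(x) \le \omega(x)$ $\mu$-almost everywhere; in particular $\psi$ is measurable as an a.e.\ limit of measurable functions and, being controlled by the integrable majorant $\omega$ (together with the corresponding lower bound), it is integrable, so $\int_\Omega \psi\,\d\mu$ is well defined.

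For the convergence of the integrals the plan is to apply Fatou's lemma twice to nonnegative sequences. Applying it to $g_k := \omega - \psi_k \ge 0$, and using $\liminf_k g_k = \omega - \psi$ a.e., gives
\[
\int_\Omega (\omega - \psi)\,\d\mu \;\le\; \liminf_{k\to\infty}\int_\Omega (\omega - \psi_k)\,\d\mu \;=\; \int_\Omega \omega\,\d\mu \;-\; \limsup_{k\to\infty}\int_\Omega \psi_k\,\d\mu ,
\]
where the finiteness of $\int_\Omega \omega\,\d\mu$ allows me to cancel it from both sides, yielding $\limsup_k \int_\Omega \psi_k\,\d\mu \le \int_\Omega \psi\,\d\mu$. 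Running Fatou's lemma in the same way on $h_k := \omega + \psi_k \ge 0$ produces the matching lower bound $\int_\Omega \psi\,\d\mu \le \liminf_k \int_\Omega \psi_k\,\d\mu$. Combining the two inequalities forces $\lim_{k\to\infty}\int_\Omega \psi_k\,\d\mu$ to exist and to equal $\int_\Omega \psi\,\d\mu$, which is the claim.

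The step I would be most careful about — and the only place where anything can go wrong — is the role of the domination hypothesis. Without an integrable majorant the conclusion is false (the standard caution being sequences of the type $k\,\mathbf{1}_{[0,1/k]}$, whose integrals remain equal to $1$ while the pointwise limit vanishes), and in the argument above it is exactly the finiteness of $\int_\Omega \omega\,\d\mu$ that legitimises subtracting it across the Fatou inequality. One should also record that the domination must in effect bound $|\psi_k|$ (equivalently, supply both an integrable upper and an integrable lower bound) for the second application of Fatou's lemma; if only the one-sided bound $\psi_k \le \omega$ in the statement is used, one still recovers the one-sided conclusion $\limsup_k \int_\Omega \psi_k\,\d\mu \le \int_\Omega \psi\,\d\mu$, which suffices in all the places where this theorem is invoked later, the sequences there being nonnegative. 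Since the result is classical, beyond this sketch I would simply refer to \cite{RS1}.
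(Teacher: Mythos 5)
The paper offers no proof of this statement at all: it is quoted as a standard result and attributed to \cite{RS1} (Reed--Simon), so there is nothing in the paper to compare your argument against line by line. Your proposal is the canonical double-Fatou proof of the dominated convergence theorem and is correct as mathematics. What is worth highlighting is your closing caveat, which in fact catches an imprecision in the theorem as the paper states it: the hypothesis is written as the one-sided bound $\psi_k(x)\le\omega(x)$, whereas the conclusion $\lim_k\int\psi_k\,\d\mu=\int\psi\,\d\mu$ requires the two-sided bound $|\psi_k(x)|\le\omega(x)$ (or some other integrable lower envelope), exactly because the second application of Fatou to $\omega+\psi_k$ needs that sequence to be nonnegative. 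With only the stated one-sided bound one gets only $\limsup_k\int\psi_k\,\d\mu\le\int\psi\,\d\mu$, as you say. You are also right that this weaker conclusion, or the full theorem applied to $|\psi_k|$, covers every place the paper actually invokes Theorem \ref{r0} (the relevant sequences are nonnegative), so the slip is harmless downstream — but it is a real slip in the statement, and your proof sketch is the more careful formulation.
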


\begin{theorem}\label{r1}\cite{RS1}. 
 Let  $\{\psi_{k}\}$ be a  sequence of  measurable functions. Sum and integral such as  
 $\displaystyle{\sum_{k} \int \psi _{k}(x) \d x }$ can be interchanged in either of 
 the following cases:
\[\psi _{k } \geq 0 , \forall  k \in N ~~ \textrm{or}~~  
 \sum_{k} \int \vert \psi _{k}(x)\vert \d x <\infty .\] 
\end{theorem}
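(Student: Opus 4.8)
The plan is to reduce both cases to limit theorems applied to the sequence of partial sums $S_N(x) := \sum_{k=1}^{N} \psi_k(x)$. Each $S_N$ is measurable as a finite sum of measurable functions, $\int S_N(x)\,\d x = \sum_{k=1}^{N} \int \psi_k(x)\,\d x$ by finite additivity of the integral, and $S_N(x) \to \sum_{k}\psi_k(x)$ pointwise (in the extended sense). Everything then comes down to justifying the interchange of $\lim_{N}$ with $\int$.

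First I would treat the case $\psi_k \geq 0$ for all $k$. Then $\{S_N\}$ is a nondecreasing sequence of nonnegative measurable functions, so by the classical Monotone Convergence (Beppo Levi) theorem, a companion of Theorem \ref{r0}, the limit passes through the integral:
\[
\int \sum_{k} \psi_k(x)\,\d x \;=\; \int \lim_{N\to\infty} S_N(x)\,\d x \;=\; \lim_{N\to\infty} \int S_N(x)\,\d x \;=\; \lim_{N\to\infty} \sum_{k=1}^{N} \int \psi_k(x)\,\d x \;=\; \sum_{k} \int \psi_k(x)\,\d x,
\]
where both sides may be $+\infty$, the identity holding regardless.

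Next, for the case $\sum_{k} \int |\psi_k(x)|\,\d x < \infty$, I would apply the nonnegative case just proved to the functions $|\psi_k|$. This gives that $g(x) := \sum_{k} |\psi_k(x)|$ satisfies $\int g(x)\,\d x = \sum_{k} \int |\psi_k(x)|\,\d x < \infty$; hence $g \in L^1$ and $g(x) < \infty$ for $\mu$-almost every $x$, so the series $\sum_{k}\psi_k(x)$ converges absolutely a.e. and defines a measurable function, with $S_N \to \sum_k \psi_k$ pointwise a.e. Since $|S_N(x)| \leq \sum_{k=1}^{N} |\psi_k(x)| \leq g(x)$ for every $N$, Theorem \ref{r0} applies with dominating function $g$, yielding
\[
\int \sum_{k} \psi_k(x)\,\d x \;=\; \lim_{N\to\infty} \int S_N(x)\,\d x \;=\; \lim_{N\to\infty} \sum_{k=1}^{N} \int \psi_k(x)\,\d x \;=\; \sum_{k} \int \psi_k(x)\,\d x .
\]

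I do not expect a genuine obstacle here, since this is the standard Tonelli/Fubini interchange for series; the only points needing a word of care are the measurability of the pointwise limit $\sum_k \psi_k$ (immediate, as a pointwise limit of the measurable $S_N$) and the almost-everywhere finiteness of $g$ in the second case, which is exactly what legitimises treating $\sum_k \psi_k$ as an honest integrable function before integrating. Both are dispatched as indicated, and the two cases together establish the claim.
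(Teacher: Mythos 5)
The paper does not supply its own proof of this theorem; it is stated with a citation to Reed and Simon \cite{RS1} and treated as a black-box tool. There is therefore no in-paper argument to compare against. Your proof is the standard one — monotone convergence applied to the nondecreasing partial sums for the nonnegative case, then applying that case to the $|\psi_k|$ to manufacture an $L^1$ dominating function $g=\sum_k|\psi_k|$ and invoking dominated convergence (Theorem \ref{r0}) for the absolutely summable case — and it is correct. The only small point worth flagging is that as literally written in the paper, Theorem \ref{r0} states the one-sided bound $\psi_k(x)\leq\omega(x)$, which is weaker than the usual $|\psi_k(x)|\leq\omega(x)$ required for the dominated convergence conclusion; you correctly apply the two-sided form $|S_N|\leq g$, which is what the theorem is clearly intended to say and what the argument actually needs.
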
  
 
We base our discussion on closed Riemannian manifold $(M,g)$.  To clarify concepts we 
introduce the basic notions for closed Riemannian manifolds through the following definitions. 
\begin{definition} 
A differentiable  $n$-dimensional  manifold $M$ is a connected  paracompact 
Hausdorff topological space for which every point has a neighbourhood 
$\mathcal{U}$ that is  homeomorphic to an open subset $\Omega \subset \BR .$  
 Such a homeomorphism $\psi : \mathcal{U} \rightarrow \OM$ is called a 
chart. Again, a family $\{\mathcal{U} _{\alpha} , \psi _{\alpha}\}$ of charts for which the 
$\mathcal{U}_{\alpha}$ constitute an open covering of $M$ is called an 
atlas. The atlas $\{\mathcal{U} _{\alpha} , \psi _{\alpha}\}$ of $M$ is called 
differentiable if all charts transitions
\[\psi _{\beta} \circ \psi _{\alpha}^{- 1} :  \psi _{\alpha} (\mathcal{U} _{\alpha} \cap   
\mathcal{U} _{\beta} ) \rightarrow  \psi _{\beta} (\mathcal{U} _{\alpha} 
\cap   \mathcal{U} _{\beta})\]
are differentiable of class $C^{\infty} (M).$  A maximal differentiable atlas is called a 
differentiable structure and a manifold with differentiable structure is called 
a differentiable manifold; see  \cite{lee}, \cite{ICH} and \cite{Jost}.
\end{definition}

\begin{definition}
An $n$-dimensional topological manifold with boundary is a Hausdorff  second countable   
topological manifold $M$ in which every point  has a neighbourhood homeomorphic to an open 
subset of the upper half space $H^{n} := \{(x^{1} , \cdots , x^{n}) \in \BR : x^{n}  \geq 0 \}.$
\end{definition}
\begin{definition}
A compact topological manifold whose boundary is empty is called a closed manifold.
\end{definition}
\begin{definition} (Riemannian Manifold): A Riemannian manifold is a pair $(M,g),$ where $g$ 
is a Riemannian metric on the  smooth manifold $M.$
\end{definition}
 
\begin{definition} (Closed Riemannian Manifold): A closed Riemannian manifold  $(M,g)$ is a 
compact Riemannian manifold whose boundary is empty. 
\end{definition}
In the next section we present the basic methods and techniques employed in this work.

\section{Majorisation and the Kato's inequalities}
Majorisation techniques in conjunction with Kato's inequalities are used to compare heat 
operators, the spectral zeta functions and the zeta kernels. Majorisation is a pre-order of 
sequences of real numbers. We make the following formal definitions on majorisation 
and specify its connotation as it is used here.

\begin{definition}\label{majorisation1}
Let $x, y \in \BR$ and let $x^{\downarrow}$ and $y^{\downarrow}$ be vectors with the same 
components as $x$  and $y$ respectively. We say that $x$ weakly majorises $y$ and write 
this as $x \succ _{w} y$ if and only if 
\begin{equation}\label{majeq1}
\sum _{j=1}^{k}  x^{\downarrow} \geq   \sum _{j=1}^{k}  y^{\downarrow} ~~ 
\textrm{for}~~ k =1,2, \cdots , n-1. 
\end{equation}
That is, if $x = (x_{1}, x_{2}, \cdots, x_{n}); ~~y = (y_{1}, y_{2}, \cdots, y_{n}) \in \BR$ 
then $x \succ _{w} y$ if and only if
\begin{eqnarray*}
x_{1} &\geq& y_{1} ,\\
x_{1} + x_{2} &\geq & y_{1} + y_{2} ,\\
&\vdots & \\
x_{1} + x_{2} + \cdots + x_{k} &\geq & y_{1} + y_{2}  + \cdots + y_{k}.
\end{eqnarray*}
Equivalently, if $y$ weakly majorises $x$ we write $x \prec _{w} y$ or $y \succ _{w} x.$

If in addition to (\ref{majeq1}),  we get that 
$\displaystyle{\sum _{j=1}^{n}  x =   \sum _{j=1}^{n}y}$ 
then we say that $x$ majorises $y$ and write this as $x \succeq y .$
\end{definition}

\begin{definition}\label{majorisation2}(\cite{ZS}, \cite{WU}). 
Let $\Omega \subset \BR$ and let  $\psi : \Omega \rightarrow \RR .$ We call  the function
$\psi : \Omega \rightarrow \RR$ Schur convex if 
$x \succeq y$  implies that $\psi(x) \geq \psi (y)$   $\forall x \in \Omega .$
\end{definition}
In this line, we make the following definition.
\begin{definition}\label{majorisation3}  
We say  $x$ majorises/dominates $y$ and write $x \succ y$  whenever (\ref{majeq1}) is satisfied 
and call 
$\psi : \Omega \rightarrow \RR $  Schur convex if $ \psi '' (x) > 0 $   
$\forall x \in \Omega \subset \BR$ and  $\psi (x) \geq \psi (y).$
\end{definition}
We will also use the fact that if $x \succ y$ then $x^{-1} \prec y^{-1};$  see e.g. \cite{TMY},
\cite{ZS} and \cite{WU}.

Kato's inequality plays very salient role in this paper. For any  $\psi \in C^{\infty} (M)$ we 
exploit the Kato's inequality in proving some majorisation results for the spectral zeta kernels. 
We make the following definitions following \cite{Hislop}  and \cite{Are}.
\begin{definition} 
Let $\psi \in C^{\infty} (M)$  be any function. Define the sign function $sgn(\psi)$ by 
\begin{eqnarray} \label{katf1}
sgn  (\psi) (x)  &=&
\left\{ \begin{array}{rcl} 
\bar{\psi}|\psi|^{-1}~ ; & \textrm{if} ~~ \psi(x) \neq 0, \\
 0 ~~~ ~~~ ~;&   \textrm{if} ~~  \psi(x) = 0 
\end{array} \right.
\end{eqnarray}
\end{definition}
For any $\psi , \phi \in C^{\infty} (M)$ the following properties are satisfied:
 \begin{eqnarray}
sgn(\psi) \psi &=& \frac{\bar{\psi} \psi(x)}{|\psi|}  = |\psi|.  \label{katp1}\\
 sgn(\psi) \phi &=& 0 , ~~ \textrm{if}~~ \psi \perp \phi .  \label{katp2} \\
 sgn(\psi) \phi &=& \frac{\bar{\psi} \psi}{|\phi|} ~~ \textrm{if}~~ \psi \not\perp \phi . \label{katp3}\\
| sgn(\psi) \phi | & \leq & |\phi |.  \label{katp4} \\
\lka sgn (\psi ) T \psi , \phi  \rka  & \leq &   \lka |\psi |  , T^{\ast} \phi \rka  \label{katp5}
\end{eqnarray}  
for $T$ a generator of a strongly continuous semigroup such as the heat operator on the 
manifold and where $T^{\ast}$ is the adjoint of $T.$

For any $\epsilon  > 0,$ we also define a regularised  absolute value of $\psi$ by 
\begin{equation}\label{katf2}
\psi_{\epsilon} (x)  := \sqrt{|\psi(x)|^{2} + \epsilon ^{2}} .
\end{equation}
So, $\displaystyle{\lim _{\epsilon \rightarrow 0}  \psi_{\epsilon} (x) = |\psi(x)|}$  point-wise.
 
We also defined the regularised sign function by 
\begin{eqnarray} \label{kat16}
sgn _{\epsilon} \psi &=&
\left\{ \begin{array}{rcl} 
\psi |\psi|^{-1} ~ ; ~& \textrm{on} ~~ supp  (\psi), \\
 \epsilon  ~~~~~~~ ;~ &   \textrm{otherwise .} 
\end{array} \right.
\end{eqnarray} 
for $\psi \in C^{\infty}(T_{x}M). $

We now give the Kato's inequality for the case of the Laplacian.
\begin{proposition}
 Let $\Delta$ be the Laplacian defined by (\ref{kat2}) and let
 $\psi \in C^{\infty} (M).$   Then,  
\begin{equation}\label{katf3}
\Delta _{g} |\psi| \geq \Re \Big( sgn (\psi) \Delta _{g} \psi   \Big)
\end{equation}
except where $|\psi|$ is not differentiable.
\end{proposition}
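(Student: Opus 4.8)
The plan is to prove Kato's inequality for the Laplacian by the standard regularisation argument, using the regularised absolute value $\psi_\epsilon$ from \eqref{katf2} and letting $\epsilon \to 0$ at the end. First I would work pointwise in a local chart, away from the (measure-zero) set where $|\psi|$ fails to be differentiable, and compute the Laplacian of $\psi_\epsilon = \sqrt{|\psi|^2 + \epsilon^2}$ using the intrinsic expression \eqref{kat2}. Writing things in terms of the gradient and divergence, one has $\psi_\epsilon \, \mathrm{grad}\,\psi_\epsilon = \tfrac12 \mathrm{grad}(|\psi|^2) = \Re(\bar\psi \, \mathrm{grad}\,\psi)$, so that $\mathrm{grad}\,\psi_\epsilon = \psi_\epsilon^{-1} \Re(\bar\psi \,\mathrm{grad}\,\psi)$.

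Next I would take one more derivative. Applying $-\mathrm{div}$ and using the product rule for the divergence, $\Delta_g \psi_\epsilon = -\mathrm{div}(\psi_\epsilon^{-1}\Re(\bar\psi\,\mathrm{grad}\,\psi)) = \psi_\epsilon^{-1}\Re(\bar\psi\,\Delta_g\psi) + \psi_\epsilon^{-1}\bigl(|\mathrm{grad}\,\psi|^2 - |\mathrm{grad}\,\psi_\epsilon|^2\bigr)$, after expanding $\mathrm{div}(\bar\psi\,\mathrm{grad}\,\psi) = \bar\psi\,\mathrm{div}(\mathrm{grad}\,\psi) + \langle \mathrm{grad}\,\bar\psi, \mathrm{grad}\,\psi\rangle$ and collecting terms. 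The key pointwise inequality is then $|\mathrm{grad}\,\psi_\epsilon|^2 \le |\mathrm{grad}\,\psi|^2$: indeed $|\mathrm{grad}\,\psi_\epsilon|^2 = \psi_\epsilon^{-2}|\Re(\bar\psi\,\mathrm{grad}\,\psi)|^2 \le \psi_\epsilon^{-2}|\psi|^2|\mathrm{grad}\,\psi|^2 \le |\mathrm{grad}\,\psi|^2$, using Cauchy--Schwarz and $|\psi|^2 \le \psi_\epsilon^2$. Hence the error term is nonnegative and $\Delta_g \psi_\epsilon \ge \psi_\epsilon^{-1}\Re(\bar\psi\,\Delta_g\psi)$.

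Finally I would pass to the limit $\epsilon \to 0$. On the open set where $\psi \neq 0$, $\psi_\epsilon \to |\psi|$ and $\psi_\epsilon^{-1}\bar\psi \to sgn(\psi)$ with all derivatives converging locally uniformly, so $\Delta_g|\psi| \ge \Re(sgn(\psi)\Delta_g\psi)$ there; on the interior of $\{\psi = 0\}$ both sides vanish; and the boundary of $\{\psi=0\}$ is exactly where $|\psi|$ may fail to be differentiable, which the statement excludes. (Alternatively one argues in the distributional sense via \eqref{katp5} applied to $T = \Delta_g$, testing against nonnegative $\phi$, which avoids the pointwise differentiability issue entirely, but the regularisation computation is the cleaner route here.)

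The main obstacle I anticipate is handling the non-differentiability of $|\psi|$ cleanly: the identity $\psi_\epsilon\,\mathrm{grad}\,\psi_\epsilon = \Re(\bar\psi\,\mathrm{grad}\,\psi)$ is valid everywhere (since $\psi_\epsilon \ge \epsilon > 0$ is smooth), so the whole chain of pointwise manipulations is legitimate for each fixed $\epsilon$; the only care needed is that the convergence $\Delta_g\psi_\epsilon \to \Delta_g|\psi|$ as $\epsilon \to 0$ is justified precisely on the set where $|\psi|$ is twice differentiable, which is why the conclusion carries the stated exception. Keeping the metric-dependent factors $\sqrt{|g|}$ and $g^{ij}$ bookkept correctly through the two divergences is routine but must be done carefully, ideally by writing everything invariantly in terms of $\mathrm{grad}$, $\mathrm{div}$ and the pointwise inner product $\langle\cdot,\cdot\rangle$ induced by $g$ rather than in raw coordinates.
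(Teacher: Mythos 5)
Your proof reproduces the paper's argument step for step: regularise with $\psi_\epsilon=\sqrt{|\psi|^2+\epsilon^2}$, derive $\psi_\epsilon\,\mathrm{grad}\,\psi_\epsilon=\Re(\bar\psi\,\mathrm{grad}\,\psi)$, deduce $|\mathrm{grad}\,\psi_\epsilon|\le|\mathrm{grad}\,\psi|$ from Cauchy--Schwarz and $|\psi|\le\psi_\epsilon$, take a second derivative to isolate the nonnegative error $|\mathrm{grad}\,\psi|^2-|\mathrm{grad}\,\psi_\epsilon|^2$, and send $\epsilon\to 0$. The extra care about the zero set of $\psi$, and the alternative route via (\ref{katp5}), are refinements rather than a genuinely different method.

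One sign deserves a second look, though it is inherited from the paper. With $\Delta_g=-\mathrm{div}\,\mathrm{grad}$ as in (\ref{kat2}) one has $\mathrm{div}\,\mathrm{grad}\,\psi=-\Delta_g\psi$, and carrying that minus through your expansion of $\mathrm{div}(\bar\psi\,\mathrm{grad}\,\psi)$ gives
\[
\Delta_g\psi_\epsilon=\psi_\epsilon^{-1}\Re\bigl(\bar\psi\,\Delta_g\psi\bigr)-\psi_\epsilon^{-1}\bigl(|\mathrm{grad}\,\psi|^2-|\mathrm{grad}\,\psi_\epsilon|^2\bigr),
\]
i.e.\ a minus in front of the error term where you wrote a plus. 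The correction then works against the claim, yielding $\Delta_g\psi_\epsilon\le\psi_\epsilon^{-1}\Re(\bar\psi\,\Delta_g\psi)$. (A quick check on $\mathbb{R}$ with $\psi(x)=x$: $\Delta_g\psi=0$ but $\Delta_g\psi_\epsilon=-\epsilon^2(x^2+\epsilon^2)^{-3/2}<0$.) The classical Kato inequality in the stated $\ge$ direction is for the analyst's negative-semidefinite Laplacian $\mathrm{div}\,\mathrm{grad}$; for the geometer's nonnegative Laplacian of (\ref{kat2}) it reverses. The paper's own divergence step leading to (\ref{katf6}) silently switches to the analyst's convention, so your presentation inherits the same inconsistency. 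The regularisation strategy is sound; you should fix the sign of $\Delta_g$ (or reverse (\ref{katf3})) before the divergence step so the final inequality points the right way.
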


\begin{proof}
Observe  from (\ref{katf2}) that $\psi _{\epsilon} \geq |\psi|.$  Differentiating 
$\psi_{\epsilon}^{2} = |\psi|^{2} + \epsilon ^{2}$ gives 
\begin{equation}\label{katf4}
\psi_{\epsilon} \nabla _{g} \psi_{\epsilon} = \Re (\bar{\psi} \nabla _{g} \psi ).
\end{equation}
Squaring  (\ref{katf4})  and using that $\psi_{\epsilon} \geq |\psi|$ gives 
\begin{equation}\label{katf5}
|\nabla _{g} \psi_{\epsilon}| \leq  \psi_{\epsilon} ^{-1} |\psi| |\nabla _{g} \psi|  \leq |\nabla _{g} \psi |.
\end{equation}
Now take the divergence of (\ref{katf4})  to obtain 
\[ |\nabla _{g} \psi_{\epsilon}|^{2} + \psi_{\epsilon} \Delta _{g} \psi_{\epsilon} = 
|\nabla _{g} \psi|^{2}  + \Re (\bar{\psi} \Delta \psi ). \]
By  (\ref{katf5}),  this is equivalent to 
\begin{equation}\label{katf6}
 \psi_{\epsilon} \Delta _{g} \psi_{\epsilon} \geq \Re ( \bar{\psi}  \Delta \psi).
\end{equation}
So, using that  $sgn _{\epsilon}\psi = \bar{\psi} |\psi_{\epsilon}|^{-1}$ by (\ref{kat16}) 
so that (\ref{katf6}) becomes 
\begin{equation}\label{katf7}
\Delta _{g} \psi_{\epsilon}  \geq \Re\Big( sgn _{\epsilon} (\psi) \Delta _{g} \psi \Big).
\end{equation}
Now, since $\Delta _{g} \psi_{\epsilon} \rightarrow \Delta _{g} |\psi|$ point-wise and 
$sgn _{\epsilon}(\psi) \rightarrow  sgn (\psi)$ point-wise, take limit in (\ref{katf7}) as 
$\epsilon \rightarrow 0$ to conclude that 
\[ \Delta _{g} |\psi| \geq \Re \Big( sgn (\psi) \Delta _{g}\psi \Big) .\]
except where $|\psi|$ is not differentiable.
\end{proof}

Our next goal is to extend the Kato's inequality  (\ref{katf3}) to a more general  
class of functions. To do this we first recall the following definitions on distributions. 
We begin by recalling that that for any $1 \leq p < \infty ,$ a local $L^{p}$-space 
is defined to be 
$\displaystyle{L^{p}_{loc} (\BR) = \{ f : \int _{\Omega} |f(x)|^{p} \d x < \infty \},}$ 
for any bounded
$\Omega \subset \BR .$  We need that $L^{p}(\BR) \subset L^{p}_{loc} (\BR).$ 
We also recall that if $f \in L^{1}_{loc} (\BR)$ and $g \in C^{\infty}_{0} (\BR )$ then  
$\displaystyle{\int _{\Omega} f(x) \bar{g}(x) \d x < \infty }$ by H$\ddot{o}$lder's inequality; 
see e.g. \cite{AG}, \cite{Hislop} and \cite{kato} for details.

\begin{definition} 
Let $\phi \in  L^{1}_{loc} (\BR)$ and let $\displaystyle{\lka f , g \rka   = 
\int _{\Omega} f(x) \bar{g}(x) \d x < \infty}. $
\begin{itemize}
\item[(1.)]  A function $\psi \in L^{1}_{loc}$ is the distributional derivative of $\phi$ 
with respect to $x_{i} \in \BR$, formally $\psi = \frac{\partial \phi}{\partial x_{i}}$, if 
 $\displaystyle{\lka \psi , f \rka = - \lka \phi , \frac{\partial f}{\partial x_{i}} \rka ~~ 
 \forall  f \in C^{\infty}_{0} (\BR ) .}$ 
 
\item[(2.)]  Let $\phi _{n} , \phi \in L^{1}_{loc}(\BR).$  Then   $\phi _{n}$ converges 
$\phi$ in the distributional sense if  $\displaystyle{\lka \phi _{n} , f \rka  
\rightarrow  \lka \phi , f \rka}$ for all $f \in C^{\infty}_{0} (\BR).$ 

\item[(3.)]  Let $\phi  , \psi \in L^{1}_{loc}(\BR) .$  Then   $\phi  \geq \psi$  in the 
distributional sense if  $\displaystyle{\lka \phi  , f \rka  \geq  \lka \psi , f \rka}$ for all 
$f \in C^{\infty}_{0} (\BR).$ 
\end{itemize}
\end{definition}

\begin{definition}
Let $\omega \in C^{\infty} (\BR)~~ \omega \geq 0,$  and   
$\displaystyle{ \int \omega (x) \d x = 1 .}$ 
For $\epsilon > 0,$  we define 
\[\omega _{\epsilon} (x):= \frac{\omega( \frac{x}{\epsilon})}{\epsilon ^{n}} .\]
Then, 
\[\int \omega _{\epsilon} (x) \d x = 1.\]
We define a map $I_{\epsilon}$ by 
\begin{equation}\label{katf8}
I_{\epsilon}\psi:= \omega _{\epsilon} \ast \psi 
\end{equation}
whenever the right-hand-side of (\ref{katf8}) exists and where 
\[ (f \ast g)(x) := \int f(x-y)g(y) \d y \] 
is the convolution of $f$ and $g$. The map $I_{\epsilon}$ is called an approximation 
of the identity, or simply, an approximate identity. 
\end{definition}

\begin{lemma}\label{al}
Let $I_{\epsilon}$ be an approximate identity. 
\begin{itemize}
\item[(1.)]  If $\psi \in  L^{1}_{loc} (\BR)$ then $I_{\epsilon} \psi \in  C^{\infty} (\BR).$
\item[(2.)]  If $\psi$ is differentiable  then 
$\displaystyle{ \frac{\partial}{\partial x_{i}}(I_{\epsilon} \psi ) 
= I_{\epsilon}  \frac{\partial \psi}{\partial x_{i}}}$; that is, the approximate identity 
 $I_{\epsilon}$ commutes with the differentiation operator 
 $\displaystyle{\frac{\partial}{\partial x_{i}}}.$ 
\item[(3.)] The map $\displaystyle{ I_{\epsilon} : L^{p}(\BR) \rightarrow  L^{p}(\BR)}$ 
is bounded; and 
$|| I_{\epsilon}|| \leq 1.$
\item[(4.)] For any  $\displaystyle{\psi \in  L^{p}(\BR)}$, 
 $\displaystyle{ \lim _{\delta \rightarrow 0} || I_{\epsilon}\psi - \psi|| _{p} = 0.}$ 
\item[(5.)] For any  $\displaystyle{\psi\in  L^{1}(\BR)}$, 
 $\displaystyle{ I_{\epsilon}\psi \rightarrow \psi}$  as $\epsilon \rightarrow 0$ 
 in the sense of distribution. 
\end{itemize} 
\end{lemma}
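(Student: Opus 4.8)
The five assertions in Lemma~\ref{al} are the standard properties of mollifiers, so I would prove them in the order $(1)\to(3)\to(4)\to(5)$, with $(2)$ slotted in after $(1)$, since each later item leans on the earlier ones. For $(1)$, I would write $(I_\epsilon\psi)(x)=\int\omega_\epsilon(x-y)\psi(y)\,\d y$ and differentiate under the integral sign: because $\omega\in C^\infty(\BR)$ and, for $\psi\in L^1_{\rm loc}$, the $y$-integration is effectively over a bounded set for each fixed $x$ (the support of $\omega_\epsilon(x-\cdot)$ is the ball of radius $\epsilon|{\rm supp}\,\omega|$ around $x$), the partial derivatives $\partial^\alpha_x\omega_\epsilon(x-y)$ are bounded on that set and dominated convergence (Theorem~\ref{r0}) justifies passing every derivative through the integral. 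This simultaneously gives $I_\epsilon\psi\in C^\infty$ and, in the special case that $\psi$ is itself differentiable, an integration by parts in $y$ (no boundary term, since $\omega_\epsilon$ is compactly supported in its argument) converts $\partial_{x_i}\omega_\epsilon(x-y)=-\partial_{y_i}\omega_\epsilon(x-y)$ into $\omega_\epsilon\ast\partial_{y_i}\psi$, which is exactly $(2)$.

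For $(3)$, I would apply the convolution (Young's) inequality in the form one gets directly from Minkowski's integral inequality: $\|I_\epsilon\psi\|_p=\|\omega_\epsilon\ast\psi\|_p\le\|\omega_\epsilon\|_1\,\|\psi\|_p=\|\psi\|_p$, using $\int\omega_\epsilon=1$ and $\omega_\epsilon\ge0$; hence $\|I_\epsilon\|\le1$. For $(4)$, the strategy is the usual density argument: first establish $\|I_\epsilon\phi-\phi\|_p\to0$ for $\phi\in C_0^\infty(\BR)$ directly, by writing $(I_\epsilon\phi)(x)-\phi(x)=\int\omega_\epsilon(y)\bigl(\phi(x-y)-\phi(x)\bigr)\,\d y=\int\omega(z)\bigl(\phi(x-\epsilon z)-\phi(x)\bigr)\,\d z$, bounding the inner difference by uniform continuity of $\phi$ and controlling the tails by the compact support of $\phi$ and $\omega$; then, given a general $\psi\in L^p$ and $\eta>0$, pick $\phi\in C_0^\infty$ with $\|\psi-\phi\|_p<\eta$ and split $I_\epsilon\psi-\psi=I_\epsilon(\psi-\phi)+(I_\epsilon\phi-\phi)+(\phi-\psi)$, estimating the first and last terms by $(3)$ (so each is $<\eta$) and the middle term by the $C_0^\infty$ case. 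Finally $(5)$ follows from $(4)$ with $p=1$: for any $f\in C_0^\infty(\BR)$, $|\lka I_\epsilon\psi-\psi,f\rka|\le\|f\|_\infty\,\|I_\epsilon\psi-\psi\|_1\to0$, which is precisely convergence in the sense of distributions as defined above.

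I should note a purely cosmetic point to reconcile with the statement: item $(4)$ as written uses a limit in $\delta$ while the mollification parameter is $\epsilon$, and item $(2)$ should be read with the mild hypothesis that $\psi$ is (say) $C^1$ with locally integrable derivative, which is the setting in which the integration by parts is legitimate; I would phrase the proof accordingly. The only genuinely delicate point is $(4)$ — specifically, making the $C_0^\infty$ base case rigorous by simultaneously handling the pointwise smallness of $\phi(x-\epsilon z)-\phi(x)$ (uniform continuity) and the $L^p$-integrability in $x$ (domination by a fixed compactly supported function once $\epsilon$ is bounded, so that Theorem~\ref{r0} applies). Everything else is a direct computation with Minkowski's inequality and differentiation under the integral sign.
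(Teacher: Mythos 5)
The paper does not prove this lemma; it simply cites Hislop--Sigal. Your argument is the standard textbook proof that one would find there, and it is correct: differentiation under the integral sign for smoothness, integration by parts for item (2), Minkowski's integral inequality (Young's inequality with $\|\omega_\epsilon\|_1=1$) for item (3), the density argument with $C_0^\infty$ for item (4), and duality with $C_0^\infty$ test functions for item (5). Your remarks about the $\delta$-versus-$\epsilon$ typo in (4) and the mild $C^1$ hypothesis needed for (2) are both accurate.

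One thing worth stating more loudly than a parenthetical: your proofs of items (1), (2), and (4) lean essentially on $\omega$ being compactly supported, but the paper's definition of an approximate identity only asks for $\omega\in C^\infty(\BR)$, $\omega\geq0$, $\int\omega\,\d x=1$. Without compact support (e.g.\ a Gaussian), the convolution $\omega_\epsilon\ast\psi$ need not even exist for a general $\psi\in L^1_{\mathrm{loc}}(\BR)$, so item (1) would be false as stated, and the boundary-term-free integration by parts in (2) would also need justification. So the compact-support assumption is not merely a convenience of your proof but a genuine hypothesis missing from the paper's definition, and you are right to impose it. Also, items (3) and (4) tacitly assume $1\leq p<\infty$ (the density argument in (4) fails for $p=\infty$); again the paper does not say this, but your argument correctly treats the finite-$p$ case and that is what is used downstream.
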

For proof, one may see \cite{Hislop}.

Now, we have the following theorem for the distributional Laplacian $\Delta _{g}$
 on the a Riemannian manifold $(M,g).$ 
\begin{theorem}
Let $\psi \in  L^{1}_{loc} (M)$ and suppose that the distributional Laplacian
 $\Delta _{g}\psi \in  L^{1}_{loc} (M).$   Then 
\begin{equation}\label{katd1}
\Delta _{g} |\psi| \geq \Re \Big( sgn (\psi) \Delta _{g} \psi \Big)
\end{equation}
in the  sense of distribution.
\end{theorem}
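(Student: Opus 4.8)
The plan is to deduce the distributional Kato inequality from the smooth version (inequality \eqref{katf3}) already proved, via a regularisation/mollification argument using the approximate identity $I_\epsilon$ of Lemma \ref{al}. First I would work in a local chart $\mathcal{U}$ with coordinates identified with an open set $\Omega\subset\BR$, so that $\psi$ and $\Delta_g\psi$ become elements of $L^1_{loc}(\Omega)$ and the operator $\Delta_g$ has the divergence form \eqref{kat2} with smooth coefficients $\frac{1}{\sqrt{|g|}}$, $\sqrt{|g|}\,g^{ij}$. Set $\psi_\epsilon := I_\epsilon\psi = \omega_\epsilon\ast\psi$. By Lemma \ref{al}(1), $\psi_\epsilon\in C^\infty$, and by Lemma \ref{al}(5), $\psi_\epsilon\to\psi$ and $\Delta_g\psi_\epsilon\to\Delta_g\psi$ in the distributional sense (the latter after commuting $I_\epsilon$ past the derivatives as in Lemma \ref{al}(2), absorbing the smooth coefficients, with a commutator term that vanishes as $\epsilon\to 0$ by Lemma \ref{al}(4) since the coefficients are smooth). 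Since $\psi_\epsilon$ is smooth, the already-established Proposition applies:
\begin{equation*}
\Delta_g|\psi_\epsilon| \geq \Re\big(\sgn(\psi_\epsilon)\,\Delta_g\psi_\epsilon\big).
\end{equation*}

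Next I would pass to the limit $\epsilon\to 0$ in this inequality, tested against an arbitrary $0\le f\in C^\infty_0(\Omega)$. For the left-hand side, $|\psi_\epsilon|\to|\psi|$ in $L^1_{loc}$ (since $\psi_\epsilon\to\psi$ in $L^1_{loc}$ and $\big||a|-|b|\big|\le|a-b|$), hence $\Delta_g|\psi_\epsilon|\to\Delta_g|\psi|$ in the distributional sense; this is exactly the kind of limit controlled by Theorem \ref{r0}. For the right-hand side, the key point is that $\sgn(\psi_\epsilon)\to\sgn(\psi)$ pointwise almost everywhere on $\{\psi\ne 0\}$ (passing to a subsequence so that $\psi_\epsilon\to\psi$ a.e.), while $|\sgn(\psi_\epsilon)|\le 1$ uniformly, so by dominated convergence $\langle \Re(\sgn(\psi_\epsilon)\Delta_g\psi_\epsilon), f\rangle \to \langle \Re(\sgn(\psi)\Delta_g\psi), f\rangle$, using $\Delta_g\psi\in L^1_{loc}$ as the dominating envelope in the $\epsilon$-limit. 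Combining the two limits in the tested inequality yields $\langle \Delta_g|\psi|, f\rangle \ge \langle \Re(\sgn(\psi)\Delta_g\psi), f\rangle$ for all $0\le f\in C^\infty_0$, which is precisely \eqref{katd1}.

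The main obstacle is the behaviour of $\sgn(\psi_\epsilon)$ near the zero set of $\psi$: on $\{\psi=0\}$ the limiting sign is declared $0$, but $\sgn(\psi_\epsilon)$ need not converge there, and the product $\sgn(\psi_\epsilon)\Delta_g\psi_\epsilon$ must be shown to have negligible contribution from a neighbourhood of $\{\psi=0\}$. The way around this is the regularised sign $\sgn_\epsilon$ of \eqref{kat16} together with the regularised absolute value $\psi_\epsilon$ of \eqref{katf2} (not to be confused with the mollification — I would in fact combine both regularisations, first mollify and then smooth the absolute value): the bound $|\sgn_\delta(\psi)\,\phi|\le|\phi|$ from \eqref{katp4} and the pointwise convergence $\sqrt{|\psi|^2+\delta^2}\to|\psi|$ give uniform integrable domination, and the a.e.\ convergence $\sgn_\delta(\psi)\to\sgn(\psi)$ (including the value $0$ on $\{\psi=0\}$, which is handled correctly by the $\sqrt{|\psi|^2+\delta^2}$ denominator since $\psi/\sqrt{|\psi|^2+\delta^2}\to 0$ there) closes the argument. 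Finally, since the inequality holds in every chart and is independent of the chart (both sides transform as distributions / the Laplacian is intrinsically defined), a partition-of-unity argument patches the local statements to the global distributional inequality on $(M,g)$.
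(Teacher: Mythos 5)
Your proposal follows essentially the same strategy as the paper: mollify $\psi$ with the approximate identity $I_\epsilon$, apply the already-proved smooth Kato inequality to $I_\epsilon\psi$, and pass to the limit $\epsilon\to 0$ in the distributional sense using boundedness of the (regularised) sign together with $L^1_{loc}$ convergence of $\Delta_g(I_\epsilon\psi)$ and dominated convergence. You are actually more careful than the paper on two points it glosses over: you note that $I_\epsilon$ commutes exactly only with constant-coefficient derivatives (Lemma \ref{al}(2)), so a commutator/Friedrichs-type estimate is needed to conclude $\Delta_g(I_\epsilon\psi)\to\Delta_g\psi$ for the variable-coefficient divergence-form operator in \eqref{kat2}; and you correctly separate the mollification parameter from the parameter in the regularised absolute value $\sqrt{|\psi|^2+\delta^2}$ and regularised sign, whereas the paper's proof uses a single $\epsilon$ for both and then loosely invokes a subsequence argument. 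Your chart-plus-partition-of-unity patching is also a detail the paper leaves implicit.
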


\begin{proof}
Let $\psi \in  L^{1}_{loc} (M).$  By lemma (\ref{al}), $I_{\epsilon}\psi$ 
is smooth for any $\epsilon > 0.$  So, inserting 
 $I_{\epsilon} \psi$ in the theorem in place of $\psi ,$ we obtain for any $\epsilon > 0$ that 
 \begin{equation}\label{katd2}
\Delta _{g} (I_{\epsilon} \psi)_{\epsilon} \geq 
\Re \Big( sgn _{\epsilon} (I_{\epsilon} \psi) \Delta _{g} (I_{\epsilon}\psi)\Big).
\end{equation}
To remove the approximate identity in equation (\ref{katd2}), we use the fact  that since 
$sgn _{\epsilon} (I_{\epsilon} \psi) \Delta _{g} (I_{\epsilon}\psi)$  is a  
sequence  in  $L^{1}_{loc} (M)$, there exists a subsequence of 
$sgn _{\epsilon} (I_{\epsilon}\psi) \Delta _{g} (I_{\epsilon}\psi)$ which converges 
to $sgn _{\epsilon} (\psi) \Delta _{g} (\psi)$ except possibly on a set of measure zero  
(almost everywhere).

Besides, since   $\Delta _{g} \psi \in  L^{1}_{loc} (M),$  it follows from lemma (\ref{al}) 
that  the limit  of  $\Delta _{g} (I_{\epsilon} \psi)$ as $\epsilon \rightarrow 0^{+}$  is 
 $\Delta _{g} \psi \in L^{1}_{loc} (M).$  Also by the boundedness of 
 $ sgn _{\epsilon} (I_{\epsilon} \psi)$ we have that   
 \[\lim _{\epsilon \rightarrow 0^{+}} sgn _{\epsilon} 
 (I_{\epsilon} \psi)\Big(\Delta _{g} (I_{\epsilon}\psi)  
  -  \Delta _{g}\psi  \Big) = 0 \]
 in the distributional  sense.  
 
It is now left to prove that there is a subsequence such that  
$ sgn _{\epsilon} (I_{\epsilon} \psi)\Delta _{g} (I_{\epsilon}\psi)$  converges to 
$sgn _{\epsilon} (\psi\Delta _{g}\psi) .$  
But Lebesgue dominated converges theorem ensures this. Hence, taking this subsequential 
limit in (\ref{katd2}) gives 
\begin{equation}\label{katd3}
\Delta _{g} |\psi |_{\epsilon} \geq 
\Re \Big((sgn _{\epsilon} \psi) \Delta _{g}\psi\Big).
\end{equation}
Therefore,
$\displaystyle{\lim _{\epsilon \rightarrow 0} \Big( \Delta _{g}|\psi |_{\epsilon} \geq 
\Re \Big( (sgn _{\epsilon} \psi) \Delta _{g}\psi \Big)  \Big) }$  gives 
$\displaystyle{ \Delta _{g} |\psi| \geq \Re \Big( sgn (\psi) \Delta _{g} \psi \Big) }$ 
in the  sense of distribution as required.
\end{proof}

\begin{corollary}
Suppose $\psi , ~ \Delta \psi \in L^{2} (M)$ then 
\begin{equation}\label{katlap3}
\Delta |\psi| \geq \Re \Big( \bar{\psi} \psi ^{-1} \Delta \psi  \Big).
\end{equation}
\end{corollary}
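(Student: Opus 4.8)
The plan is to derive the corollary as a special case of the preceding distributional Kato inequality, Theorem~(\ref{katd1}), by observing that the hypotheses $\psi,\Delta_g\psi\in L^2(M)$ are stronger than those of that theorem. First I would note that on a closed (compact) Riemannian manifold $(M,g)$ the total volume is finite, so by H\"older's inequality one has the inclusion $L^2(M)\subset L^1(M)\subset L^1_{\rm loc}(M)$; consequently both $\psi$ and the distributional Laplacian $\Delta_g\psi$ lie in $L^1_{\rm loc}(M)$ and Theorem~(\ref{katd1}) applies verbatim, yielding
\[
\Delta_g|\psi|\;\geq\;\Re\bigl(\mathrm{sgn}(\psi)\,\Delta_g\psi\bigr)
\]
in the sense of distributions.

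Next I would unwind the definition of $\mathrm{sgn}(\psi)$ given in (\ref{katf1}): on the set where $\psi(x)\neq 0$ we have $\mathrm{sgn}(\psi)=\bar\psi\,|\psi|^{-1}=\bar\psi\,\psi^{-1}$ (using $|\psi|^2=\bar\psi\psi$, so $|\psi|^{-1}=(\bar\psi\psi)^{-1/2}$ and hence $\bar\psi|\psi|^{-1}=\bar\psi(\bar\psi\psi)^{-1/2}$; after rationalising this is the quantity written $\bar\psi\psi^{-1}$ in (\ref{katlap3}), interpreted as $\bar\psi/|\psi|$ where $\psi\ne 0$). On the set where $\psi(x)=0$ the sign function is defined to be $0$, and there $\Delta_g\psi$ multiplied by it is $0$ as well, so the right-hand side of (\ref{katd1}) agrees almost everywhere with $\Re(\bar\psi\,\psi^{-1}\Delta_g\psi)$ under the convention that this product vanishes where $\psi=0$. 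Substituting this identification into the distributional inequality gives exactly (\ref{katlap3}).

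The only genuinely delicate point is the bookkeeping on the zero set of $\psi$ and the status of the expression $\bar\psi\psi^{-1}$ there; I would handle this by appealing to property (\ref{katp1})--(\ref{katp4}), in particular $|\mathrm{sgn}(\psi)\phi|\leq|\phi|$, which shows $\mathrm{sgn}(\psi)\Delta_g\psi$ is a well-defined element of $L^1_{\rm loc}(M)$ (indeed of $L^2(M)$, since $|\mathrm{sgn}(\psi)\Delta_g\psi|\le|\Delta_g\psi|$), so that testing against $f\in C_0^\infty$ is legitimate. With that observed, the corollary follows by reading off the already-established distributional statement with the explicit formula for $\mathrm{sgn}(\psi)$ inserted; no new estimate is required.
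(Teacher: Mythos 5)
Your argument is correct, but it takes a genuinely different route from the paper's. The paper proves the corollary by a density argument: it takes a sequence $\{\psi_j\}$ of smooth functions approximating $\psi$ with $\nabla\psi_j\to\nabla\psi$ and $\Delta\psi_j\to\Delta\psi$ in $L^2$, establishes the a priori estimate $\|\nabla\psi\|^2=-\langle\psi,\Delta\psi\rangle_g\leq\|\psi\|_2\|\Delta\psi\|_2$, and then passes to the limit, invoking ``continuity of these estimates.'' You instead bypass approximation entirely and simply observe that, because $M$ is a closed Riemannian manifold of finite volume, the Cauchy--Schwarz inequality gives $L^2(M)\subset L^1(M)\subset L^1_{\rm loc}(M)$, so that the hypotheses $\psi,\Delta_g\psi\in L^2(M)$ already place you inside the scope of the distributional Kato inequality (\ref{katd1}); the corollary is then a direct specialisation. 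Your inclusion argument is cleaner and more self-contained: it makes the logical dependence on (\ref{katd1}) explicit, whereas the paper's density argument re-derives a bound that was essentially already built into the proof of (\ref{katd1}) via mollification, and its concluding ``by continuity'' step is left vague. The one point you flag — that the printed $\bar\psi\,\psi^{-1}$ should be read as $\bar\psi\,|\psi|^{-1}=\mathrm{sgn}(\psi)$ on $\{\psi\neq 0\}$, consistent with (\ref{katf1}) and with (\ref{katp4}) guaranteeing $\mathrm{sgn}(\psi)\Delta_g\psi\in L^2(M)$ — is worth stating, since as literally written $\bar\psi/\psi$ is a different unimodular factor; you are right that the paper's intended meaning is $\mathrm{sgn}(\psi)$, and with that reading your proof is complete.
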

\begin{proof}
Since $\psi , ~ \Delta \psi \in L^{2} (M)$ then there exists a sequence 
$\{\psi _{j}\} \in \mathcal{D}$ which converges in $\mathcal{D}$ such that in $L^{2}$-norm 
\[\nabla \psi _{j} \rightarrow  \nabla \psi ~~ \textrm{and}~~ \Delta \psi _{j}
 \rightarrow  \Delta \psi \]
Moreover, 
\[ ||\nabla \psi ||^{2} = \lka \nabla \psi , \nabla  \psi \rka _{g}
= - \lka \psi , \Delta \psi \rka  _{g}  \leq ||\psi ||_{2} ||\Delta \psi ||_{2} .\]
By the continuity of these estimates, it follows that 
$\displaystyle{\Delta |\psi| \geq \Re \Big( sgn (\psi) \Delta \psi  \Big)}$ for all 
$\psi \in L^{2} (M).$
\end{proof}

We can extend this concept to  semigroups. Let $T$ be a strongly continuous semigroup with
 differential operator generator $A$ defined by 
\[Ax = \lim _{t\downarrow 0} \frac{1}{t}\Big(T(t) - id  \Big)x \]
for $x \in M.$ That is $T$ satisfies 
\begin{enumerate}
\item $T(0) = id$ that is an identity operator on $M.$
\item For all $0 \leq t,s \in M,$  $T(t+s) = T(t)T(s).$
\item For all $x_{0} \in M ,$ $||T(t) x_{0} - x_{0}|| \rightarrow 0$ as $t \downarrow 0.$ 
That is the strong operator topology. See e.g. \cite{AG}  and \cite{RS1}.
\end{enumerate}

We have the following result.
\begin{theorem}\label{kkt1}
The semigroup $T$ satisfies Kato's inequality, that is,
$\displaystyle{\lka sgn(\psi)A \psi , \phi \rka \leq  \lka |\psi| , A^{\ast}\phi \rka}$ 
for $\psi  \in \textrm{domain}(A),$  $\phi \in \textrm{domain}(A^{\ast})$ where $T^{\ast}$
 is the adjoint of $A.$
\end{theorem}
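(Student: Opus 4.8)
The plan is to connect the pointwise/distributional Kato inequality already established, namely $\Delta_g|\psi| \ge \Re\big(\mathrm{sgn}(\psi)\Delta_g\psi\big)$, with the abstract semigroup formulation by exploiting the structure of the generator $A$ as a limit of difference quotients of $T(t)$. First I would record the elementary algebraic identity: for $\psi \in \mathrm{domain}(A)$ and $\phi \in \mathrm{domain}(A^{\ast})$ with $\phi \ge 0$, one has for each $t > 0$
\[
\Big\lka \mathrm{sgn}(\psi)\,\tfrac{1}{t}\big(T(t)-\mathrm{id}\big)\psi,\ \phi \Big\rka
= \tfrac{1}{t}\Big\lka \mathrm{sgn}(\psi)\,T(t)\psi,\ \phi \Big\rka - \tfrac{1}{t}\Big\lka |\psi|,\ \phi \Big\rka,
\]
using property (\ref{katp1}) that $\mathrm{sgn}(\psi)\psi = |\psi|$. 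The key inequality to insert is $|\mathrm{sgn}(\psi)\,T(t)\psi| \le |T(t)\psi|$, which is property (\ref{katp4}) with $\phi$ replaced by $T(t)\psi$, together with positivity of $T$ (so that $T(t)$ applied to $|\psi| - \text{something}$ behaves monotonically). More precisely, since $T$ is a positive semigroup and $|\psi| \ge \Re\big(\mathrm{sgn}(\psi)\psi\big)$ trivially and $|T(t)\psi| \le T(t)|\psi|$ by positivity, we get $\Re\big(\mathrm{sgn}(\psi)T(t)\psi\big) \le |T(t)\psi| \le T(t)|\psi|$ pointwise.

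Next I would pair this pointwise bound against the nonnegative test function $\phi$ and move the operator across the inner product:
\[
\Big\lka \mathrm{sgn}(\psi)\,T(t)\psi,\ \phi \Big\rka \le \big\lka T(t)|\psi|,\ \phi \big\rka = \big\lka |\psi|,\ T(t)^{\ast}\phi \big\rka.
\]
Subtracting $\lka|\psi|,\phi\rka$ from both sides, dividing by $t > 0$, and then letting $t \downarrow 0$ gives on the left $\lka \mathrm{sgn}(\psi)A\psi,\ \phi\rka$ (using strong continuity and that $\psi \in \mathrm{domain}(A)$, so $\tfrac{1}{t}(T(t)-\mathrm{id})\psi \to A\psi$, and boundedness of multiplication by $\mathrm{sgn}(\psi)$ from (\ref{katp4})) and on the right $\lka |\psi|,\ A^{\ast}\phi\rka$ (using $\phi \in \mathrm{domain}(A^{\ast})$ so $\tfrac{1}{t}(T(t)^{\ast}-\mathrm{id})\phi \to A^{\ast}\phi$). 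This yields exactly $\lka \mathrm{sgn}(\psi)A\psi,\ \phi\rka \le \lka |\psi|,\ A^{\ast}\phi\rka$ for all nonnegative $\phi$, which is the asserted Kato inequality (\ref{katp5}). For general $\phi$ one reduces to the nonnegative case in the usual way, or one simply states the inequality for nonnegative test functions as is standard.

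The main obstacle I anticipate is justifying the interchange of limit and inner product cleanly, and in particular making rigorous the pointwise domination $|T(t)\psi| \le T(t)|\psi|$: this is where one genuinely uses that $T$ is a \emph{positive} (order-preserving) semigroup — on the heat semigroup $e^{-t\Delta_g}$ it follows from positivity of the heat kernel, but stated for an abstract strongly continuous semigroup it must either be assumed or derived, and this is precisely the content of the Arendt-type characterisation cited as \cite{Are}. I would therefore either invoke positivity of $T$ as a standing hypothesis (consistent with $T$ being the heat semigroup, the only case used later) or cite \cite{Are} for the equivalence between positivity and the abstract Kato inequality. The remaining steps — the difference-quotient identity, moving $T(t)$ to $T(t)^{\ast}$ across $\lka\cdot,\cdot\rka$, and the final limit — are routine once domination and the semigroup axioms (1)--(3) are in hand, using Theorem \ref{r0} (dominated convergence) where a limit must pass under an integral.
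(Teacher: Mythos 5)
Your proposal follows essentially the same chain of inequalities as the paper's proof: expand $A\psi$ as the limit of the difference quotient $\tfrac{1}{t}(T(t)-\mathrm{id})\psi$, use $\mathrm{sgn}(\psi)\psi=|\psi|$ and the bound $|\mathrm{sgn}(\psi)\,T(t)\psi|\le|T(t)\psi|$ from (\ref{katp4}), move $T(t)$ across the inner product as $T^{\ast}(t)$, and pass to the limit $t\downarrow 0$. The place where you go beyond the paper is in making explicit two hypotheses that the paper's proof uses silently: (i) $\phi$ must be taken nonnegative for the pointwise estimate $\mathrm{sgn}(\psi)T(t)\psi\le|T(t)\psi|$ to survive the pairing $\lka\cdot,\phi\rka$, and (ii) the middle step $\lka |T(t)\psi|,\phi\rka \le \lka |\psi|,T^{\ast}(t)\phi\rka = \lka T(t)|\psi|,\phi\rka$ is precisely the positivity (order-preservation) of $T(t)$, giving $|T(t)\psi|\le T(t)|\psi|$ pointwise. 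The theorem as stated imposes neither of these, yet the proof cannot close without them; as you correctly observe, positivity of $T$ is exactly the Arendt characterisation cited as \cite{Are}, and it should appear as a standing hypothesis (harmless here, since the only $T$ used later is the heat semigroup $\e^{-t\Delta_g}$, whose kernel is positive). Your version is therefore a cleaner and more honest account of the same argument; the paper's proof is not wrong in substance, but it omits exactly the assumptions you flagged.
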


\begin{proof} 
Let  $\psi \in \textrm{domain}(A)$ and $\phi \in \textrm{domain}(A^{\ast})$ then 
\begin{eqnarray*}
\lka sgn(\psi)A \psi , \phi \rka &=& 
\lim _{t \downarrow 0}\frac{1}{t} \lka sgn(\psi)(T(t)\psi - \psi) , \phi \rka  \\
&=& \lim _{t \downarrow 0}\frac{1}{t} \lka sgn(\psi)T(t)\psi -| \psi | , \phi \rka  \\
&\leq & \lim _{t \downarrow 0}\frac{1}{t} \lka | T(t)\psi | -| \psi | , \phi \rka  \\
&\leq & \lim _{t \downarrow 0}\frac{1}{t} \lka |\psi | , (T^{\ast}(t)\phi  - \phi) \rka  
=  \lka |\psi| , T^{\ast}\phi \rka .
\end{eqnarray*}
\end{proof}
Of course, in the case that $A$ is self-adjoint, the the Kato's inequality for the semigroup is 
\[\lka sgn(\psi)A \psi , \phi \rka \leq  \lka |\psi| , A\phi \rka .\]

Now consider a Schr$\ddot{o}$dinger type operator $H = \Delta + V$  on $C^{\infty}(M)$ 
where $V$ is a nonnegative multiplication potential. We have the following immediate result.
\begin{theorem}
For all $\psi \in C^{\infty}(M),$ 
$\displaystyle{H |\psi| \geq \Re \Big( sgn (\psi ) V \psi \Big).}$ 
\end{theorem}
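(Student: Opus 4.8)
The plan is to reduce the statement to the Kato inequality for the Laplacian already proved, inequality (\ref{katf3}) (and, if one wants the weakest hypotheses, its distributional form (\ref{katd1})), by peeling off the potential term, which is harmless precisely because $V$ is a nonnegative real multiplication operator.

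First I would note that, wherever $|\psi|$ is differentiable, $H|\psi| = \Delta_{g}|\psi| + V|\psi|$ pointwise, the exceptional set being handled exactly as in (\ref{katf3}). To the Laplacian term I apply (\ref{katf3}) to get $\Delta_{g}|\psi| \geq \Re\Big( sgn(\psi)\,\Delta_{g}\psi\Big)$. For the potential term I invoke property (\ref{katp1}), $sgn(\psi)\psi = |\psi|$, together with the fact that $V$, being multiplication by a real nonnegative function, commutes with $sgn(\psi)$ and with the operation $\Re(\cdot)$; this gives the identity $V|\psi| = V\,\Re\Big(sgn(\psi)\psi\Big) = \Re\Big(sgn(\psi)\,V\psi\Big)$. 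Adding the two relations yields
\[
H|\psi| \;=\; \Delta_{g}|\psi| + V|\psi| \;\geq\; \Re\Big( sgn(\psi)\,\Delta_{g}\psi\Big) + \Re\Big( sgn(\psi)\,V\psi\Big) \;=\; \Re\Big( sgn(\psi)\,H\psi\Big),
\]
so that $H$ itself satisfies Kato's inequality, consistent with Theorem \ref{kkt1} applied to the self-adjoint generator $H$.

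It then remains to pass from this sharp form to the asserted one, i.e. to discard the summand $\Re\Big(sgn(\psi)\,\Delta_{g}\psi\Big)$ — equivalently, to use $\Delta_{g}|\psi|\geq 0$. I would justify this not pointwise but in the sense appropriate to the subsequent applications to heat kernels and spectral zeta kernels: on the closed manifold $(M,g)$ the operator $\Delta_{g}$ is nonnegative, self-adjoint, and annihilates constants, so pairing (\ref{katf3}) (or (\ref{katd1})) with the constant function, and more generally with nonnegative $\phi\in C^{\infty}(M)$ using positivity of $\e^{-t\Delta_{g}}$, gives $\langle \Delta_{g}|\psi|,\phi\rangle\geq 0$; hence $\langle H|\psi|,\phi\rangle\geq\langle V|\psi|,\phi\rangle=\langle \Re(sgn(\psi)V\psi),\phi\rangle$ for such $\phi$, which is the content of the statement.

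I expect this last step to be the main obstacle. The quantity $\Re\Big(sgn(\psi)\,\Delta_{g}\psi\Big)$ carries no definite sign pointwise — on a closed manifold it is negative near minima of $|\psi|$ — so the gain over (\ref{katf3}) is supplied entirely by the nonnegativity of $V$ and the divergence structure of $\Delta_{g}$, and the inequality $H|\psi|\geq \Re(sgn(\psi)V\psi)$ is to be read in the averaged/distributional sense just described rather than as a pointwise identity. Everything else is the routine bookkeeping of (\ref{katf3}) together with the algebraic identity (\ref{katp1}).
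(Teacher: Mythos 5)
Your route is genuinely different from the paper's, and the first half of it is cleaner. You decompose $H|\psi| = \Delta_g|\psi| + V|\psi|$, apply Kato's inequality (\ref{katf3}) to the Laplacian piece, and use the identity $V|\psi| = \Re\bigl(\mathrm{sgn}(\psi)V\psi\bigr)$ (valid because $V$ is multiplication by a real nonnegative function and $\mathrm{sgn}(\psi)\psi=|\psi|$), arriving at the natural Kato inequality for the Schr\"odinger operator, $H|\psi|\geq\Re\bigl(\mathrm{sgn}(\psi)H\psi\bigr)$. The paper does not do this. Its argument first reproduces, almost verbatim, the kernel computation from Corollary~\ref{corsa1} (the one used to prove essential self-adjointness) and in the process appears to conclude $\psi=0$ --- which, applied to every $\psi\in C^\infty(M)$, would be absurd --- and then in the final line writes the chain $H|\psi|\geq\Delta|\psi|\geq\Re\bigl(\mathrm{sgn}(\psi)\Delta\psi\bigr)\geq\Re\bigl(\mathrm{sgn}(\psi)V\psi\bigr)$, whose last inequality is asserted without any justification. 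Your derivation of the sharper $H|\psi|\geq\Re\bigl(\mathrm{sgn}(\psi)H\psi\bigr)$ is the correct and standard statement, and it is what Theorem~\ref{kkt1} would yield for the self-adjoint generator $H$; the paper's proof never actually establishes anything of this shape.

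That said, the step you flag as the main obstacle really is a gap, and your proposed repair does not close it. Since $V|\psi|=\Re\bigl(\mathrm{sgn}(\psi)V\psi\bigr)$ exactly, the asserted conclusion $H|\psi|\geq\Re\bigl(\mathrm{sgn}(\psi)V\psi\bigr)$ is \emph{equivalent} to $\Delta_g|\psi|\geq0$, and this fails on a closed manifold: take any smooth, strictly positive, nonconstant $\psi$; then $\Delta_g|\psi|=\Delta_g\psi$ is smooth with $\int_M\Delta_g\psi\,\mathrm dV=0$, so it takes both signs, and pairing with a nonnegative bump $\phi$ supported where $\Delta_g\psi<0$ gives $\langle\Delta_g|\psi|,\phi\rangle<0$. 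Your appeal to positivity of $\e^{-t\Delta_g}$ does not help here --- positivity preservation of the semigroup means $\e^{-t\Delta_g}|\psi|\geq0$, which is a Markovian property, not that the generator sends nonnegative functions to nonnegative distributions. Pairing with the constant function only yields $\int_M\Delta_g|\psi|\,\mathrm dV\geq0$, which holds with equality and carries no content. So the inequality as stated cannot be rescued in the tested-against-nonnegative-$\phi$ sense you propose; the correct and provable statement is the one you already derived, $H|\psi|\geq\Re\bigl(\mathrm{sgn}(\psi)H\psi\bigr)$, and any version involving only $\Re\bigl(\mathrm{sgn}(\psi)V\psi\bigr)$ on the right-hand side requires an additional, and false, positivity input about $\Delta_g|\psi|$.
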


\begin{proof}
By definition of  $I_{\epsilon}$ we have 
\begin{equation}\label{katlap5}
H  I_{\epsilon} |\psi| = \Delta  I_{\epsilon} |\psi| + V I_{\epsilon} |\psi| 
 = I_{\epsilon} (\Delta   |\psi| + V |\psi|)  \geq 0 
\end{equation}
since $\Delta |\psi|  + V |\psi| \geq 0 .$  But,  
\begin{equation}\label{katlap6}
 \lka  \Delta  I_{\epsilon} |\psi| ,   I_{\epsilon} |\psi| \rka _{g}   = 
- || \nabla (I_{\epsilon} |\psi|) ||^{2}_{2}  \leq 0 .
\end{equation}
Since by (\ref{katlap5})  the left side of (\ref{katlap6}) is nonnegative then 
$\nabla (I_{\epsilon} |\psi|) =0$  in the $L^{2}$-sense. \\ 
Therefore $I_{\epsilon} |\psi| = V|\psi| = c \geq 0$ with $c$ a constant.

Since $|\psi \in L^{2}(M)|$ and  $|I_{\epsilon} |\psi | \rightarrow  |\psi|$ in the 
$L^{2}$-sense, we conclude that $c=0$ and so $I_{\epsilon} |\psi | =0,$  $|\psi| =0$ 
and so $\psi =0.$ 

Hence, 
$\displaystyle{ H  |\psi | \geq \Delta |\psi | \geq \Re \Big( sgn(\psi) \Delta \psi \Big) \geq  
\Re \Big( sgn(\psi) V \psi \Big)   .}$
\end{proof}

Let $X$ and $Y$ be operators on Hilbert space of functions $\mathcal{H}.$ 
We assume that $X$ and $Y$ are well defined generators of the heat semigroups $\e ^{-tX}$  
and $\e ^{-tY}$ satisfying 
\begin{eqnarray}\label{hkm1}
\left.\begin{array}{rcl}
\big(\frac{\partial}{\partial t}    +  X \big) \e ^{-tX} f&= & 0  \\
\displaystyle{\lim _{t \rightarrow 0}  \e ^{-tX} f } & = & f
\end{array}\right\}
\end{eqnarray}
and similarly 
\begin{eqnarray}\label{hkm2}
\left.\begin{array}{rcl}
\big(\frac{\partial}{\partial t}    +  Y \big) \e ^{-tY} f&= & 0  \\
\displaystyle{\lim _{t \rightarrow 0}  \e ^{-tY} f } & = & f
\end{array}\right\}
\end{eqnarray}
for some $f \in \mathcal{H}.$ In the theorem that follows, we denote 
$\e ^{-tX} f$ just  by $\e ^{-tX}.$

\begin{lemma}\label{km2a}
If $X$ is the Laplacian  $\Delta$ in (\ref{kat2}) then $\Delta \e ^{-t \Delta}  
= \e ^{-t \Delta} \Delta .$ 
\end{lemma}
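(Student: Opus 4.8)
The plan is to establish $\Delta\,\e^{-t\Delta} = \e^{-t\Delta}\,\Delta$ on a suitable dense domain by exploiting the spectral decomposition of $\Delta$ described in the introduction. Recall from \eqref{kat3} that there is an orthonormal basis $\{\psi_k\}$ of $L^2(M)$ with $\Delta_g\psi_k = \la_k\psi_k$, the eigenvalues listed with multiplicity. For $f = \sum_k c_k\psi_k$ in the domain of $\Delta$ one has, by the functional calculus, $\e^{-t\Delta}f = \sum_k \e^{-t\la_k} c_k\psi_k$, and this series converges in $L^2(M)$ for every $t>0$ because $\{\e^{-t\la_k}\}$ is bounded (indeed $\la_k\to\infty$). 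First I would verify that $\e^{-t\Delta}f$ lies in the domain of $\Delta$: since $\sum_k \la_k^2 \e^{-2t\la_k}|c_k|^2 \le \bigl(\sup_k \la_k^2\e^{-2t\la_k}\bigr)\sum_k |c_k|^2 < \infty$, the vector $\sum_k \la_k\e^{-t\la_k}c_k\psi_k$ is a well-defined element of $L^2(M)$, which is precisely $\Delta\,\e^{-t\Delta}f$.

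The key steps, in order, are: (i) write both sides as norm-convergent series against the eigenbasis; (ii) apply $\Delta$ term by term using $\Delta_g\psi_k = \la_k\psi_k$, which is legitimate because $\Delta$ is closed and the partial sums converge while their images under $\Delta$ also converge (closedness lets one pass $\Delta$ through the limit); (iii) observe that term by term both $\Delta\,\e^{-t\Delta}f$ and $\e^{-t\Delta}\Delta f$ equal $\sum_k \la_k\e^{-t\la_k}c_k\psi_k$, since scalar multiplication by $\la_k$ and by $\e^{-t\la_k}$ commute; (iv) conclude equality of the two operators on $\mathrm{domain}(\Delta)$, which contains $H^2(M)$ and in particular all finite linear combinations of eigenfunctions, a dense set. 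One may phrase this more abstractly: $\Delta$ is self-adjoint with compact resolvent, $\e^{-t\Delta}$ is given by the Borel functional calculus applied to the function $\la\mapsto\e^{-t\la}$, and any two functions of the same self-adjoint operator commute; $\Delta = g(\Delta)$ with $g(\la)=\la$ and $\e^{-t\Delta}=h(\Delta)$ with $h(\la)=\e^{-t\la}$, so $\Delta\,\e^{-t\Delta}=(gh)(\Delta)=(hg)(\Delta)=\e^{-t\Delta}\,\Delta$ on the natural domain. Either route works; I would present the eigenfunction-series version since the paper has already set up \eqref{kat3} and since it makes the domain bookkeeping transparent.

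The main obstacle is purely the domain bookkeeping rather than any deep analytic point: one must be careful that $\e^{-t\Delta}$ maps $\mathrm{domain}(\Delta)$ into itself (so that $\Delta\,\e^{-t\Delta}f$ makes sense) and that the interchange of $\Delta$ with the infinite sum is justified. Both are handled by the smoothing property of the heat semigroup — for $t>0$, $\e^{-t\Delta}f$ is actually smooth, so it certainly lies in $\mathrm{domain}(\Delta^m)$ for all $m$ — together with the closedness of $\Delta$. A minor point to flag is that the identity is asserted as an operator identity, so one should state explicitly the common domain on which it holds, namely $\mathrm{domain}(\Delta)$ (equivalently $H^2(M)$), with the remark that for $t>0$ the left-hand side is in fact defined on all of $L^2(M)$ by the boundedness of $\la\mapsto\la\e^{-t\la}$. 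No genuinely hard estimate is needed; the content is the spectral theorem plus the discreteness of the spectrum, both already available from the setup in the introduction.
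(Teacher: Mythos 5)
Your proof is correct, but it follows a genuinely different route from the paper's. You argue spectrally: expand $f=\sum_k c_k\psi_k$ in the eigenbasis \eqref{kat3}, use the functional calculus to write $\e^{-t\Delta}f=\sum_k\e^{-t\la_k}c_k\psi_k$, check that this lands in $\mathrm{domain}(\Delta)$ via boundedness of $\la\mapsto\la\e^{-t\la}$, and conclude that both $\Delta\e^{-t\Delta}f$ and $\e^{-t\Delta}\Delta f$ equal $\sum_k\la_k\e^{-t\la_k}c_k\psi_k$ — equivalently, invoke the commutativity of Borel functions of a single self-adjoint operator. The paper instead works directly with the integral kernel: it writes $\e^{-t\Delta}f(x)=\int_M K(t,x,y)f(y)\,\d V(y)$, differentiates under the integral sign, and uses the heat equation $\Delta_x K=-\partial_t K$ to evaluate $\Delta\e^{-t\Delta}f$; for $\e^{-t\Delta}\Delta f$ it moves $\Delta_y$ from $f$ onto $K$ (this is Green's identity on a closed manifold, which the paper attributes to the symmetry $K(t,x,y)=K(t,y,x)$), then applies the heat equation again, so both sides reduce to $-\partial_t\int_M K f\,\d V$. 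Your spectral version is cleaner on domain bookkeeping and transfers verbatim to any self-adjoint operator with compact resolvent, while the paper's kernel computation is more concrete and fits the paper's overall kernel-centric framework ($K(t,x,y)$, $\zeta_g(s,x,y)$); both leave the usual differentiation-under-the-integral (or term-by-term) justification implicit, though you at least flag the closedness argument needed to pass $\Delta$ through the limit.
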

\begin{proof}
A direct computation shows this. That is, 
\begin{eqnarray*}
\Delta \e ^{-t \Delta} f(x) &=& \Delta _{x} \Big( \int _{M}K(t,x,y) f(y) dV(y) \Big)\\
&=&  \int _{M}\Delta _{x} K(t,x,y) f(y) dV(y)  \\
&=& - \partial _{t}\int _{M} K(t,x,y) f(y) dV(y) 
\end{eqnarray*}
and by symmetry of $K(t,x,y)$ in $x$ and $y$ we have 
\begin{eqnarray*}
\e ^{-t \Delta} \Delta f(x) &=&  \int _{M}   K(t,x,y) \Delta _{y} f(y) \d V(y) \\
&=&  \int _{M}\Delta _{y} K(t,x,y) f(y) \d V(y)  \\
&=& - \partial _{t}\int _{M} K(t,x,y) f(y) \d V(y)
\end{eqnarray*}
which proves the lemma.
 \end{proof}

\begin{theorem}\label{hkm3}
The heat semigroup $\e ^{-t(X + Y)}$ satisfies the Duhamel's formula
\begin{equation}\label{hkm4}
\e ^{-t(X + Y)} = \e ^{-tX} - \int _{0}^{t}  \e ^{-(t-s)(X + Y)}Y \e ^{-sX} \d s .
\end{equation}
\end{theorem}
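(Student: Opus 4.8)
The plan is to prove the Duhamel formula \eqref{hkm4} by differentiating a cleverly chosen auxiliary function of the integration parameter and then integrating back. First I would fix $f \in \mathcal{H}$ (suppressed in the notation) and, for $t>0$ fixed, consider the map $s \mapsto \Phi(s) := \e^{-(t-s)(X+Y)} \e^{-sX}$ on the interval $s \in [0,t]$. The endpoint values are immediate: $\Phi(0) = \e^{-t(X+Y)}$ and $\Phi(t) = \e^{-tX}$, since in each case one of the two semigroup factors collapses to the identity by the initial conditions in \eqref{hkm1} and \eqref{hkm2} (applied with generator $X+Y$ and with generator $X$ respectively). So the whole content of the theorem is the identification of $\Phi(t) - \Phi(0)$ with the integral term.

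Next I would differentiate $\Phi$ with respect to $s$ using the product rule. The factor $\e^{-(t-s)(X+Y)}$ contributes $+(X+Y)\e^{-(t-s)(X+Y)}$ (the sign flips because the exponent is $-(t-s)$ and we differentiate in $s$), and the factor $\e^{-sX}$ contributes $-\,\e^{-(t-s)(X+Y)} X \e^{-sX}$, using in the second term that $X$ commutes with its own heat semigroup — this is exactly the content of Lemma \ref{km2a} for $X = \Delta$, and more generally is the statement that $\partial_s \e^{-sX} = -X\e^{-sX} = -\e^{-sX}X$. Adding the two contributions, the bare $X$ terms cancel and one is left with
\begin{equation}\label{duhamel-derivative}
\frac{\der}{\der s}\Phi(s) = \e^{-(t-s)(X+Y)}\, Y\, \e^{-sX}.
\end{equation}
Integrating \eqref{duhamel-derivative} over $s \in [0,t]$ via the fundamental theorem of calculus gives $\Phi(t) - \Phi(0) = \int_0^t \e^{-(t-s)(X+Y)} Y \e^{-sX}\,\der s$, which upon substituting the endpoint values and rearranging is precisely \eqref{hkm4}.

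The main obstacle is not algebraic but analytic: one must justify that $\Phi$ is actually (strongly) differentiable on $[0,t]$ and that the product rule and fundamental theorem of calculus apply at the level of unbounded operators acting on a fixed vector $f$. This requires $f$ to lie in a suitable common core — e.g. the intersection of the domains of $X$, $Y$, and $X+Y$, which on a closed manifold one can take to be $C^\infty(M)$ (dense in $\mathcal{H}$) — so that all the expressions $X\e^{-sX}f$, $Y\e^{-sX}f$, etc., are well-defined and depend continuously on $s$, and so that the heat semigroups map this core into itself (smoothing). Given the standing hypotheses of the paper that $X$, $Y$, and $X+Y$ generate the heat semigroups in \eqref{hkm1}–\eqref{hkm2} and the commutation Lemma \ref{km2a}, I would state the differentiation step on smooth $f$, carry out the cancellation and integration as above, and then extend to all of $\mathcal{H}$ by density together with the uniform boundedness of the semigroups involved. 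The commutativity $X\e^{-sX} = \e^{-sX}X$ supplied by Lemma \ref{km2a} is the one place where a genuine structural fact (rather than a formal manipulation) enters, and it is what makes the $X$-terms cancel cleanly.
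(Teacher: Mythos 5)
Note first that the paper does not actually supply a proof of Theorem~\ref{hkm3}; it simply refers the reader to Rosenberg \cite{Rose} and Cycon--Froese--Kirsch--Simon \cite{CFK}. So there is no in-text argument to compare against, and what you have written is the standard proof of the Duhamel formula. Your overall strategy --- introduce $\Phi(s) = \e^{-(t-s)(X+Y)}\e^{-sX}$, compute the two endpoint values, differentiate in $s$, observe the cancellation, and integrate --- is correct and is exactly what one finds in the cited references. Your discussion of the analytic caveats (common core, smoothing, density extension) is also sensible and is the part that is usually elided in textbook treatments.

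There is, however, one misattribution in your argument that is worth fixing. You write that the cancellation of the bare $X$ terms rests on ``$X$ commutes with its own heat semigroup,'' i.e.\ $X\e^{-sX}=\e^{-sX}X$, and you cite Lemma~\ref{km2a} for this. But that identity does not in fact produce the cancellation. After the product rule you have
\[
\frac{\der}{\der s}\Phi(s) = (X+Y)\,\e^{-(t-s)(X+Y)}\,\e^{-sX} \;-\; \e^{-(t-s)(X+Y)}\,X\,\e^{-sX},
\]
and the first term has $(X+Y)$ sitting to the \emph{left} of the exponential $\e^{-(t-s)(X+Y)}$, while the $X$ in the second term sits \emph{between} the two exponentials. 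Commuting $X$ with $\e^{-sX}$ only moves the $X$ in the second term even further to the right; it does not align the operators. What you actually need is the analogous commutation for the \emph{other} factor, namely $(X+Y)\,\e^{-(t-s)(X+Y)} = \e^{-(t-s)(X+Y)}\,(X+Y)$ (equivalently, you may differentiate the first factor with the generator written on the right from the outset, which is valid on the domain of $X+Y$). Once $(X+Y)$ is moved between the two exponentials, the $X$'s cancel and the $Y$ survives. Lemma~\ref{km2a} as stated concerns $\Delta$ and $\e^{-t\Delta}$ only; you should either invoke the general fact that a strongly continuous semigroup commutes with its own generator on the generator's domain, applied to the semigroup generated by $X+Y$, or record the $X+Y$ version of Lemma~\ref{km2a} explicitly. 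With that correction the argument is complete.
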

For proof, one can see e.g. \cite{Rose} or \cite{CFK}.

\section{The spectral zeta function and kernel} 
The Riemann zeta function $\zeta_{R}$ is the function defined as 
$\displaystyle{\zeta_{R}: \{ s \in \CC : \Re(s) >1 \} \rightarrow \CC }$ with 
\begin{equation}\label{hz1}
\zeta _{R}(s) = \sum _{k = 1}^{\infty} \frac{1}{k ^{s}} ;~~ \Re (s) > 1.
\end{equation}
From the Riemann zeta function (\ref{hz1}),  notice that since 
\begin{equation}\label{n3}
\sum _{k=1}^{\infty} \big\vert \frac{1}{k^{s}} \big\vert 
= \sum _{k=1}^{\infty} \frac{1}{k^{\Re(s)}} ,
\end{equation}
the series on the right-hand-side of  (\ref{n3}) converges absolutely if and only 
if $\Re (s) > 1.$  The Riemann zeta function defined by (\ref{hz1}) above is holomorphic in the 
region indicated. It,  however,  admits a meromorphic continuation to 
the whole $s$-complex plane with only simple pole at $s=1$ and has residue $1.$  
For details, see \cite{OM16}.  

Hurwitz zeta function $\zeta _{H} (s, a)$ is a generalization of the Riemann 
zeta function  (\ref{hz1}). It is defined below.
\begin{definition}
Let $s \in C $ and $0< a \leq 1 .$ Then for $\Re (s) > 1 ,$ the Hurwitz zeta 
function  is defined by 
\begin{equation}\label{hz2}
\zeta _{H}(s, a) = \sum _{k = 0}^{\infty} \frac{1}{(k + a)^{s}} ;~~ \Re (s) > 1.
\end{equation}
\end{definition}
Clearly, $\zeta _{H}(s, 1) =  \zeta _{R}(s) .$

Another generalisation of the Riemann zeta function is the spectral 
zeta function, which is  the function of interest in this paper. 
The spectral zeta function is explicitly defined through  the operator $\Delta _{g} ^{- s}$ 
and its integral kernel $\zeta_g(s,x,y)$, also called the zeta kernel. 
 The operator $ \DEL _{g} ^{- s}$ is
uniquely defined by the following properties (see e.g \cite{OM16}):
\begin{itemize}
\item[(1.)] it is linear on $L^{2}(M)$ with 1-dimensional null space consisting of 
constant functions. 
This ensures that the smallest eigenvalue of $ \DEL _{g} ^{- s}$ is $0$ of 
multiplicity $1$  with corresponding eigenfunction $\frac{1}{\sqrt{V}}$ where 
$V$  is the volume of $M;$
\item[(2.)] the image of $\Delta _{g} ^{- s}$ is contained in the orthogonal 
complement of  constant functions in $L^{2}(M)$ i.e.
\[\int _{M} \DEL _{g} ^{- s} \psi \d V_{g} = 0 ~~ \forall ~ \psi \in L^{2}(M)~~
 \mathrm{constant};~~ \mathrm{and} \]
\item[(3.)] $\DEL _{g} ^{- s} \psi _{k} (x) = \la _{k} ^{- s}\psi _{k}(x)$ for all 
$\psi _{k};~ k > 0$  an orthonormal basis of eigenfunction of $\Delta _{g} .$
\end{itemize}
Then for  $\Re(s) > \frac{n}{2}$, we see by property (3.) that $ \Delta _{g} ^{- s}$ is 
trace class, with trace given by
the spectral zeta function, namely
\begin{equation}\label{3}
\zeta _{g} (s) =\sum _{k = 1} ^{\infty}
 \frac{1}{\la _{k} ^{s}}= \tr (\Delta _{g} ^{- s}) =  \int _{M} \zeta_g (s,x,x) \d V ~;~
   ~~~\Re (s) >\frac{n}{2}.
\end{equation}

\begin{theorem} \cite{SM}.  Let $\{\psi _{k}\}_{k=1}^\infty$ be an orthonormal eigenbasis 
for $\Delta_g$ corresponding to the eigenvalues $\{ \la_k \}_{k=1}^\infty$ listed with 
multiplicities. Then the zeta kernel,  $\zeta _{g} (s,x,y)$, also called the point-wise zeta 
function, is equal to
\begin{equation}\label{4}
\zeta _{g} (s,x,y) = \sum _{k =1} ^{\infty} \frac{\psi _{k} (x)
 \bar{\psi}_{k} (y)}{\la _{k} ^{s}} ; ~~ \Re(s) > \frac{n}{2}.
\end{equation}
\end{theorem}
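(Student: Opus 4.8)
The plan is to obtain the zeta kernel formula from the spectral decomposition of the operator $\Delta_g^{-s}$, mirroring the way the spectral zeta function itself is built up in (\ref{3}). First I would recall that $\{\psi_k\}_{k=1}^\infty$ is an orthonormal basis of $L^2(M)$ consisting of eigenfunctions of $\Delta_g$, with $\Delta_g\psi_k=\lambda_k\psi_k$ and $\lambda_k>0$ for $k\geq 1$. By the spectral theorem, for any $\phi\in L^2(M)$ one has the expansion $\phi=\sum_k\langle\phi,\psi_k\rangle\psi_k$, and the defining property (3.) of $\Delta_g^{-s}$ gives $\Delta_g^{-s}\phi=\sum_k\lambda_k^{-s}\langle\phi,\psi_k\rangle\psi_k$. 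Writing the inner product as an integral, $\langle\phi,\psi_k\rangle=\int_M\phi(y)\bar\psi_k(y)\,\d V(y)$, and interchanging the sum with the integral, yields
\[
(\Delta_g^{-s}\phi)(x)=\int_M\Big(\sum_{k=1}^\infty\frac{\psi_k(x)\bar\psi_k(y)}{\lambda_k^s}\Big)\phi(y)\,\d V(y),
\]
so by definition of the integral kernel the bracketed quantity is exactly $\zeta_g(s,x,y)$.

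The key steps, in order, are: (i) invoke the spectral decomposition of $\Delta_g$ and the characterisation of $\Delta_g^{-s}$ via properties (1.)--(3.); (ii) apply $\Delta_g^{-s}$ to an arbitrary test function and express the coefficients as integrals against the $\bar\psi_k$; (iii) justify exchanging the infinite sum and the integral, which is where convergence must be controlled; and (iv) read off the kernel and note uniqueness of the integral kernel of a bounded (indeed trace-class, for $\Re(s)>n/2$) integral operator. For step (iii) the natural tool is Theorem (\ref{r1}) (Fubini--Tonelli) together with the fact that for $\Re(s)>n/2$ the operator $\Delta_g^{-s}$ is Hilbert--Schmidt, so $\sum_k\lambda_k^{-2\Re(s)}<\infty$; combined with the Weyl-type growth of $\lambda_k$ and uniform bounds on eigenfunctions (or simply the $L^2$-orthonormality used through Cauchy--Schwarz on partial sums) this gives the absolute convergence in $L^2(M\times M)$ needed to interchange.

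The main obstacle is precisely the convergence justification in step (iii): the series $\sum_k\psi_k(x)\bar\psi_k(y)\lambda_k^{-s}$ converges pointwise and uniformly only once one has elliptic estimates forcing $\Re(s)$ large enough (the bound $\Re(s)>n/2$ is exactly what makes $\zeta_g(s,x,x)$ integrable and the diagonal finite), so one should be careful to state the result for $\Re(s)>n/2$ and argue convergence in $L^2(M\times M)$ first, upgrading to pointwise convergence of $\zeta_g(s,x,y)$ where the standard heat-kernel/elliptic-regularity bounds apply. Once the interchange is licensed, matching the kernel of $\Delta_g^{-s}$ against the series is immediate, and restricting to the diagonal $x=y$ and integrating recovers (\ref{3}), which serves as a consistency check: $\int_M\zeta_g(s,x,x)\,\d V=\sum_k\lambda_k^{-s}\int_M|\psi_k(x)|^2\,\d V=\sum_k\lambda_k^{-s}=\zeta_g(s)$.
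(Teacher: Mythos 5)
The paper itself supplies no proof of this theorem --- it is attributed to Minakshisundaram \cite{SM} and the reader is referred to \cite{OM16} and \cite{MMT} --- so there is no in-paper argument to compare against. Your route, reading off the integral kernel of $\Delta_g^{-s}$ from the spectral expansion $\Delta_g^{-s}\phi=\sum_k\lambda_k^{-s}\langle\phi,\psi_k\rangle\psi_k$ supplied by property (3.), is the standard one and is correct in outline.

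Two points would tighten it. First, for $\Re(s)>n/2$ the operator $\Delta_g^{-s}$ is \emph{trace class} (as the paper itself states just before the theorem), which is strictly stronger than the Hilbert--Schmidt condition $\sum_k\lambda_k^{-2\Re(s)}<\infty$ that you invoke; the cleaner phrasing is that a trace-class (hence Hilbert--Schmidt) integral operator on $L^2(M)$ has a kernel in $L^2(M\times M)$, which the spectral expansion then identifies with the series. Second, the upgrade from the resulting $L^2(M\times M)$ identity to the \emph{pointwise} equality claimed in (\ref{4}) on the full half-plane $\Re(s)>n/2$ is the one place where real work remains. Crude sup-norm bounds on $\psi_k$ together with Weyl's law give absolute and uniform convergence only for $\Re(s)$ somewhat larger than $n/2$; to reach $\Re(s)>n/2$ one needs the local (pointwise) Weyl law $\sum_{\lambda_k\le T}|\psi_k(x)|^2\sim c_nT^{n/2}$ --- equivalently the on-diagonal heat-kernel asymptotics --- combined with Abel summation, or else one proves the formula for $\Re(s)$ large and extends by analytic continuation in $s$, or one passes through the heat kernel and Mellin transform as in Lemma \ref{l2}. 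You gesture at this correctly; making the quantitative input explicit is what would close the argument.
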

For proof, see e.g. \cite{OM16} and \cite{MMT}.

From here on, we suppress the subscript $g$ in $\zeta _{g} (s)$  and  $\Delta _{g}.$  
 We simply write $\zeta  (s)$  and $\Delta $ for   $\zeta _{g} (s)$  and  $\Delta _{g}$ 
 respectively,  unless for purpose of emphasis. 

A relationship between the zeta kernel and the heat kernel enables to define the 
spectral zeta kernel explicitly. The heat kernel, 
$ K(t,x,y) : (0, \infty) \times M \times M \rightarrow R , $
is a continuous function on $(0, \infty) \times M \times M .$  It is the so-called 
fundamental solution to the heat equation, i.e, it is the unique solution to the 
following system of equations:
\begin{equation}\label{e1}
\left.\begin{array}{rcl}
\big(\frac{\partial}{\partial t}    +  \DEL _{x}\big)K(t,x,y) &= & 0  \\
\displaystyle{\lim _{t \rightarrow 0} \int _{M}  K(t,x,y) \psi (y) \d V_{y}} & = & \psi(x)
\end{array}\right\}
\end{equation}
for  $t >0;~x,y \in M$ and  $\Delta _{x}$ is the Laplacian acting on any 
 $\psi ~\in ~ L^{2}(M) ,$ where the limit in the second equation of (\ref{e1})  
 is uniform for every $\psi \in C^{\infty}(M).$  

The heat operator $ \e ^{- t \Delta} : L^{2} (M) \rightarrow L^{2} (M)$
is the operator defined by the integral kernel $K(t,x,y)$ as
\[(\e ^{- t \Delta} \psi )(y) :=  \int _{M} K(t,x,y) \psi (x) \d V_{x}\] 
for $\psi \in L^{2} (M)$.
The heat kernel is symmetric in the space variables, that is 
$K(t,x,y) =  K(t, y , x) ~~ \forall ~ x,y \in M .$  Thus the heat operator is self-adjoint, 
that is, for $\psi _{1} ,\psi _{2} \in  L^{2} (M)$  we have
\begin{eqnarray*}
 \lka  \e ^{- t \Delta} \psi _{1} , \psi _{2}  \rka _{L^{2}(M)}  &=&
\int _{M} \big\{ \int _{M} K(t,x,y) \psi _{1}(y) \d V_{y} \big \}  
\bar{\psi }_{2}(x) \d V_{x} \\
&=&   \int _{M} \big\{ \int _{M} K(t,y,x) \psi _{2}(x) \d V_{x} \big \} 
 \bar{\psi}_{1}(y) \d V_{y} =  \lka  \psi _{1} ,
   \e ^{- t \Delta} \psi _{2}  \rka  _{L^{2}(M)}. 
 \end{eqnarray*}

Now returning to the heat kernel,  let $\displaystyle{\{ \psi_{k} \}_{k=0}^{\infty}}$ with 
$\displaystyle{\int _{M} \psi_{k}(x) \bar{\psi}_{l}(x) \d V_{g}(x) = \delta _{kl}}$ 
be orthonormal basis of eigenfunctions of $\Delta$ with corresponding eigenvalues
 $\{\lambda _{k} \}$  listed with multiplicities. Then  $\{ \psi _{k} \}_{k=0}^{\infty}$  
 are also eigenfunctions  of the heat operator with  corresponding eigenvalues 
 $\{ \e ^{- \la _{k} t} \}.$  In terms of these eigenfunctions,  the Mercer's theorem 
 implies that $\e ^{- t \Delta}$ is trace-class for all $t > 0$ and  one can write the 
 heat kernel  as 
\[ K(t,x,y) = \sum _{k=0}^{\infty} \e ^{- \la _{k} t} \psi_{k} (x) \bar{\psi}_{k}(y) . \]
The convergence for all $t > 0$ is uniform on $M \times M .$  
In particular, the trace of the heat operator
\begin{equation}\label{5}
\tr(\e ^{- \Delta _{g} t}) =  \sum _{k=0}^{\infty} \e ^{- \la _{k} t} |\psi_{k} (x) |^{2}  
= \sum _{k=0}^{\infty} \e ^{- \la _{k} t} = \int _{M} K(t,x,x) \d V_{g}(x)  < \infty .
\end{equation}
\begin{lemma}\label{l2}
The zeta kernel and the heat kernel are related by
\[\zeta_{g}(s,x,y) = \frac{1}{\Gamma (s)} \int _{0}^{\infty} t^{s-1} (K(t,x,y)
 - \frac{1}{V})\d t,\]
$\Re(s) > \frac{n}{2}.$
\end{lemma}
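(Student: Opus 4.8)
The plan is to start from the spectral expansions of both kernels and use the Gamma-function integral representation $\la^{-s}=\frac{1}{\Gamma(s)}\int_0^\infty t^{s-1}e^{-\la t}\,\d t$, valid for $\Re(s)>0$ and $\la>0$. First I would write $K(t,x,y)=\sum_{k=0}^\infty e^{-\la_k t}\psi_k(x)\bar\psi_k(y)$, separating off the zero mode: since $\Delta_g$ has one-dimensional null space spanned by the constant $1/\sqrt V$, the $k=0$ term is $\psi_0(x)\bar\psi_0(y)=1/V$, so $K(t,x,y)-\frac1V=\sum_{k=1}^\infty e^{-\la_k t}\psi_k(x)\bar\psi_k(y)$, where now every $\la_k>0$. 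Then I would multiply by $t^{s-1}$, integrate over $(0,\infty)$, and formally interchange sum and integral to obtain $\frac{1}{\Gamma(s)}\int_0^\infty t^{s-1}\bigl(K(t,x,y)-\frac1V\bigr)\d t=\sum_{k=1}^\infty\frac{\psi_k(x)\bar\psi_k(y)}{\la_k^{s}}$, which is exactly $\zeta_g(s,x,y)$ by the theorem giving (\ref{4}).

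The work is therefore in justifying the interchange of summation and integration, which splits naturally at $t=1$. For the tail $t\ge 1$: the uniform convergence of $\sum_k e^{-\la_k t}\psi_k(x)\bar\psi_k(y)$ on $M\times M$ for each fixed $t>0$, together with Weyl asymptotics $\la_k\sim c\,k^{2/n}$ ensuring $\sum_k e^{-\la_k}<\infty$, gives an integrable majorant $t^{\Re(s)-1}e^{-\la_k(t-1)}e^{-\la_k}$, so Fubini–Tonelli (Theorem \ref{r1}, or the dominated-convergence Theorem \ref{r0}) applies. For the small-$t$ part $0<t<1$, one cannot sum term-by-term against the full heat kernel because $K(t,x,x)$ blows up like $t^{-n/2}$; the point is that we have already subtracted the constant and we take $\Re(s)>\frac n2$, so $\int_0^1 t^{\Re(s)-1}\bigl(K(t,x,y)-\frac1V\bigr)\d t$ converges absolutely by the on-diagonal heat-kernel bound $|K(t,x,y)-\frac1V|\le C t^{-n/2}$ (Cauchy–Schwarz reduces the off-diagonal case to the diagonal one). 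Having established absolute convergence of the whole integral, the interchange on $(0,1)$ is then legitimate by Theorem \ref{r1} applied to $\psi_k(t):=t^{s-1}e^{-\la_k t}\psi_k(x)\bar\psi_k(y)$.

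The step I expect to be the main obstacle is the small-$t$ estimate: one must control $\int_0^1 t^{s-1}\sum_{k\ge1}e^{-\la_k t}|\psi_k(x)\bar\psi_k(y)|\,\d t$ without circular reference to the very kernel identity being proved. I would handle it by invoking the classical short-time asymptotic $K(t,x,x)=(4\pi t)^{-n/2}(1+O(t))$ (hence $|K(t,x,x)-\frac1V|\le Ct^{-n/2}$ for $0<t<1$) together with $|K(t,x,y)|^2\le K(t,x,x)K(t,y,y)$, which follows from the semigroup property $K(t,x,y)=\int_M K(t/2,x,z)K(t/2,z,y)\,\d V_g(z)$ and Cauchy–Schwarz. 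With this bound in hand the convergence of $\int_0^1 t^{\Re(s)-1}t^{-n/2}\d t$ for $\Re(s)>\frac n2$ finishes the argument, and taking $y=x$ and integrating over $M$ recovers formula (\ref{3}) for $\zeta_g(s)$ as a consistency check.
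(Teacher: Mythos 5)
Your proposal follows essentially the same route as the paper: insert $\la_k^{-s}=\frac{1}{\Gamma(s)}\int_0^\infty t^{s-1}e^{-\la_k t}\,\d t$ into the eigenfunction expansion (\ref{4}), recognize the $k\ge 1$ sum as $K(t,x,y)-\frac1V$, and interchange sum and integral via Theorem \ref{r1}. The only difference is that you fill in the justification of that interchange (splitting at $t=1$, short-time heat-kernel bounds, Cauchy--Schwarz on $K$), which the paper passes over with a bare citation of Fubini--Tonelli; your added detail is correct and genuinely tightens an argument that, as written in the paper, does not obviously satisfy either hypothesis of Theorem \ref{r1} off the diagonal.
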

\begin{proof}
Observe that for any $x > 0$ and $\Re (s) > 0 ,$
\[x ^{- s} = \frac{1}{\Gamma (s)} \int _{0}^{\infty} e^{- x t} t^{s-1} \d t \] 
since a change of variable from, say, $x t$ to $\tau $ gives $ x ^{- s}$ and since 
$\Gamma (s)$ is holomorphic for $\Re (s) > 0 .$

Consequently,
\[\la _{k} ^{- s} = \frac{1}{\Gamma (s)} \int _{0}^{\infty} e^{- \la _{k} t}
  t^{s-1} \d t  .\]
Thus,
\[\zeta_{g}(s,x,y) = \sum _{k=1}^{\infty}\left[ \psi_k(x) \overline{\psi}_k(y)
 \frac{1}{\Gamma (s)} \int _{0}^{\infty} t^{s-1} e^{- \la _{k} t} \d t
 \right];~~~~\Re(s) > \frac{n}{2} .\]
Therefore, using Theorem \ref{r1} to switch the order of the sum and 
the integral, we have
\begin{eqnarray*}
\zeta_{g}(s,x,y) &=&   \frac{1}{\Gamma (s)} \int _{0}^{\infty} 
\left(\sum _{k=1}^{\infty} e^{- \la _{k} t} \psi_k(x) 
\overline{\psi}_k(y)   \right) t^{s-1} dt.
\end{eqnarray*}
Thus,
 \begin{equation}\label{zetaint}
 \zeta_{g}(s,x,y) = \frac{1}{\Gamma (s)} \int _{0}^{\infty} t^{s-1}
  (K(t,x,y) - \frac{1}{V} )dt .
 \end{equation}
\end{proof}

We have another result as the corollary that follows. 
\begin{corollary}\label{corsa1}
The Schr$\ddot{o}$dinger-like operator $H = \Delta + V$  where $V \in L^{2}_{loc}$  and 
$V \geq 0$  is essentially self-adjoint on $C^{\infty}_{0}(M)$.
\end{corollary}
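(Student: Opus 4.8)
The plan is to reduce essential self-adjointness to the triviality of a distributional kernel, and then to kill that kernel by means of the distributional Kato inequality~(\ref{katd1}) together with the compactness of $M$. Since $\Delta\ge 0$ on $C^{\infty}_{0}(M)$ and $V\ge 0$, the operator $H=\Delta+V$ is symmetric and nonnegative on $C^{\infty}_{0}(M)$; hence, by the standard criterion for semibounded operators (see e.g.~\cite{RS1}), it is essentially self-adjoint if and only if $\mathrm{Ker}(H^{\ast}+1)=\{0\}$. Concretely, I must show: if $w\in L^{2}(M)$ satisfies $\lka w,(\Delta+V+1)\phi\rka=0$ for every $\phi\in C^{\infty}_{0}(M)$ --- equivalently, since $\Delta$ and $V$ have real coefficients, $(\Delta+V+1)w=0$ in $\mathcal{D}'(M)$ --- then $w=0$. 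So I fix such a $w$.

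First I would check the regularity needed to feed $w$ into Kato's inequality. Because $M$ is compact, $w\in L^{2}(M)\subset L^{1}_{loc}(M)$, and since $V\in L^{2}_{loc}$ a Cauchy--Schwarz estimate on each coordinate chart of compact closure gives $Vw\in L^{1}_{loc}(M)$; therefore $\Delta w=-(V+1)w\in L^{1}_{loc}(M)$ as well, so $w$ lies in the class to which~(\ref{katd1}) applies. Applying the distributional Kato inequality to $w$, and using $\mathrm{sgn}(w)\,w=|w|$, $\mathrm{sgn}(w)\,(V+1)w=(V+1)|w|$ and $V\ge 0$, I obtain a distributional differential inequality of the form
\[
\Delta|w|+|w|\ \le\ -\,V|w|\ \le\ 0\qquad\textrm{in}\ \mathcal{D}'(M),
\]
with $|w|\ge 0$, the potential term being present only to be discarded with the favourable sign.

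Finally I would integrate this inequality over the closed manifold. Pairing with the constant test function $1\in C^{\infty}(M)=C^{\infty}_{0}(M)$ and transferring the Laplacian onto it --- which produces no boundary term since $\partial M=\emptyset$, and $\Delta 1=0$ --- gives $\lka\Delta|w|,1\rka=\lka|w|,\Delta 1\rka=0$, so that
\[
\int_{M}|w|\ \d V_{g}\ =\ \lka\Delta|w|+|w|,\,1\rka\ \le\ -\!\int_{M}V|w|\ \d V_{g}\ \le\ 0 .
\]
Since $|w|\ge 0$, this forces $|w|\equiv 0$, hence $w=0$; thus $\mathrm{Ker}(H^{\ast}+1)=\{0\}$ and $H$ is essentially self-adjoint on $C^{\infty}_{0}(M)$. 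The step I expect to be the main obstacle is the middle one: one must keep the sign convention for $\Delta$ straight so that the potential enters the differential inequality for $|w|$ with the sign that can be thrown away, and one must make sure that $w$ genuinely satisfies the hypotheses of~(\ref{katd1}), i.e. that $Vw,\ \Delta w\in L^{1}_{loc}(M)$ and that the regularisations used in its proof converge. Once the differential inequality is secured, compactness of $M$, through $\int_{M}\Delta(\cdot)\,\d V_{g}=0$, closes the argument at once, with no need for heat-kernel or resolvent positivity.
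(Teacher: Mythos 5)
Your proof is correct and, after the shared reduction to showing $\ker(H^{\ast}+1)=\{0\}$ via Kato's inequality, it closes the argument by a genuinely different and cleaner route than the paper. The paper mollifies, writes $\Delta I_{\epsilon}|\psi|=I_{\epsilon}\Delta|\psi|\geq 0$, then pairs with $I_{\epsilon}|\psi|$ to get $\langle\Delta(I_{\epsilon}|\psi|),I_{\epsilon}|\psi|\rangle=-\|\nabla(I_{\epsilon}|\psi|)\|^{2}\leq 0$, concludes $\nabla(I_{\epsilon}|\psi|)=0$ so $I_{\epsilon}|\psi|$ is a constant $c$, and then asserts $c=0$ because $I_{\epsilon}|\psi|\to|\psi|$ in $L^{2}$. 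That last step is essentially the $\mathbb{R}^{n}$ argument and is weak on a closed manifold, where nonzero constants lie in $L^{2}$; it must really be closed off by feeding the constant back into the potential inequality $\Delta|\psi|\geq(V+1)|\psi|$ to force $c=0$. You bypass all of this: since $M$ is closed, $1\in C^{\infty}(M)=C^{\infty}_{0}(M)$ is an admissible nonnegative test function, and the single identity $\langle\Delta|w|,1\rangle=\langle|w|,\Delta 1\rangle=0$ collapses the distributional differential inequality to $\int_{M}|w|\,\mathrm{d}V_{g}\leq 0$ in one stroke. This is the natural compact-manifold argument and avoids both the mollifier and the quadratic form. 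The one caveat --- which you yourself flag --- is the direction of Kato's inequality for the geometer's (nonnegative) $\Delta_{g}$: with $\Delta_{g}=-\mathrm{div}\,\mathrm{grad}$, taking the divergence of $\psi_{\epsilon}\nabla\psi_{\epsilon}=\Re(\bar\psi\nabla\psi)$ actually produces $\Delta_{g}|\psi|\leq\Re(\mathrm{sgn}(\psi)\Delta_{g}\psi)$, which is the version you use to get $\Delta|w|+|w|\leq -V|w|$; the paper's displayed inequality~(\ref{katd1}) and its use of $\langle\Delta f,f\rangle=-\|\nabla f\|^{2}$ are tacitly in the opposite (analyst's, nonpositive) convention. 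So long as one convention is fixed and used consistently, your argument goes through; with the geometer's convention it is exactly as you wrote it.
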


\begin{proof}
Since  the domain $D(H^{\ast}) \subset L^{2}(M),$ it suffices to show that 
$\ker (H^{\ast} + 1) = \{ 0 \}.$ This implies that if 
\begin{equation}\label{corsa2}
(\Delta + V + 1) \psi  = 0,~ ~ \textrm{for} ~~ \psi \in L^{2}(M)
\end{equation}
then $\psi = 0.$ We prove (\ref{corsa2})  by Kato's inequality.  Since $\psi \in L^{2}(M)$  and 
$V \in L^{2}_{loc}(M)$ it follows by Cauchy-Schwarz inequality that $V \psi \in L^{1}_{loc} (M)$ 
following the inclusion $L^{2} \subset L^{2}_{loc} \subset L^{1}_{loc}$ from the estimate 
\[ \int _{M} 1 \cdot |\psi (x)| \d V_{g}  \leq  V_{g} \sqrt{\int _{M} 
 \cdot |\psi (x)|^{2} }\d V_{g}\]
where $V_{g}$ is the volume of $M$. This implies that $\psi \in L^{1}_{loc} (M).$ 

Using the Kato's inequality, we have 
\begin{eqnarray*}
\Delta |\psi| \geq \Re \Big(( sgn \psi)\Delta \psi  \Big)
\geq \Re \Big(( sgn \psi)(V+1) \psi  \Big)
= |\psi |(V+1)  \geq 0.
\end{eqnarray*}
Hence, the function $\Delta |\psi| \geq 0$ and so, 
\begin{equation}\label{corsa3}
\Delta I_{\epsilon}|\psi| = I_{\epsilon}\Delta |\psi| \geq 0.
\end{equation}
On the other hand, $I_{\epsilon}|\psi| \in D(\Delta)$ and therefore
\begin{equation}\label{corsa4}
\lka \Delta (I_{\epsilon}|\psi|), (I_{\epsilon}|\psi|) \rka  =
-||\nabla( I_{\epsilon} |\psi|) ||^{2} \leq 0.
\end{equation}
But by equation (\ref{corsa3}) the left side of (\ref{corsa4}) is nonnegative and so   
$\nabla( I_{\epsilon} |\psi|) =0$ in the $L^{2}$-sense  and therefore 
$I_{\epsilon} |\psi| = c \geq 0.$
But $|\psi| \in L^{2}$ and $ I_{\epsilon} |\psi| \rightarrow |\psi|$ 
in the $L^{2}$-sense and so $c =0.$  Hence $I_{\epsilon} |\psi| =0$ and so $|\psi| =0$  
and $\psi =0.$
\end{proof}

\section{Bounds for spectral kernels on the $n$-sphere}
Consider the Laplacian on the unit $n$-dimensional sphere 
$S^{n} = \{ x \in \mathbb{R}^{n + 1} : \| x\|  = 1 \}$ 
defined  in polar coordinates as  
 \begin{equation}\label{splap}
\Delta _{n} = \frac{1}{\sin ^{n -1} \theta}  \frac{\partial }{\partial \theta}
 \{ \sin ^{n - 1} \theta \frac{\partial }{\partial \theta} \} 
 + \frac{1}{\sin ^{2} \theta} \Delta _{n -1} 
 \end{equation}
 where  $\Delta _{n -1}$ is  the  Laplacian  on  $S^{n - 1} .$

The harmonic homogeneous polynomials restricted to the $n$-sphere are the eigenfunctions of the
 Laplacian on $S^{n} .$  A  detailed treatment of these functions can be found in 
 \cite{MMT} and \cite{KAW}. The restriction  of  elements  of 
$\mathcal{H} _{k}$  to $ S^{n} $ are called spherical harmonic polynomials of degree $k$, 
and are therefore eigenfunctions of $ \Delta _{n} $ with eigenvalues $k ( k + n-1) .$ 
 
The dimension $d_{k}(n)$ of the space of harmonic polynomial  $\mathcal{H}_{k}$ is 
given by the formula 
\begin{equation}\label{ef6}
d_{k} (n) = 
\begin{pmatrix}
k + n \\
n
\end{pmatrix}
-
\begin{pmatrix}
k + n - 2 \\
n
\end{pmatrix} = \frac{(2k + n - 1)(k + n - 2)!}{k! (n - 1)!}
\end{equation}
where $k \in N_{0} ~ \textrm{and}  ~ n \geq 1 
~ \mathrm{is ~  the ~ dimension ~ of ~ the ~ manifold} ~~ S^{n}.$
For proof, one may see \cite{OM16}.

The zeta and heat kernels  can be expressed  explicitly for the  $n$-sphere in terms of 
Gegenbauer polynomials. The Gegenbauer polynomials are the generalisations of the Legendre 
polynomials to higher dimensions. These polynomials can be characterised by a formula 
generalising the Rodrigues representation of  the  Legendre polynomials, namely 
\begin{equation}\label{gph0}
P_{m}(x)= \frac{1}{2^{m}m!} \frac{d^{m}}{dx^{m}}(x^{2} - 1)^{m} .
\end{equation}
The polynomials (\ref{gph0}) solve the Legendre equation 
\begin{equation}\label{gph1}
(1-x^{2})P_{m}(x)^{\prime \prime} - 2x P_{m}(x)^{\prime} + (k(k + 1) )P_{m}(x) = 0.
\end{equation}

Gegenbauer polynomials are relevant to the study of the heat and zeta kernels because
of addition formula stated below. 
\begin{lemma}(Addition formula, c.f: Morimoto \cite{MMT}) \label{addf}\\
Let $\{ \psi _{k,j}: 1 \leq j \leq  d_{k}(n)  \}$ be an orthonormal basis of the
 space of  $n$-dimensional spherical harmonics  $\mathcal{H}_{k}(S^{n}),$ i.e:
\begin{equation}\label{gph10a}
\int _{S^{n}}\psi _{k,j}(x) \bar{\psi }_{k,l}(x) \d V_{g}(x) = \delta _{jl};~~1 
\leq j,l \leq d_{k}(n).
\end{equation}
Then
\begin{equation}\label{gph11}
\sum _{j = 1}^{d_{k}(n)} \psi _{k,j}(x) \bar{\psi }_{k,l}(y) = 
\frac{d_{k}(n)}{|S^{n}|} P_{k}^{\frac{(n - 1)}{2}}(x \cdot y)
\end{equation}
where as before, $P_{k}^{n}(t)$ are the Gegenbauer polynomials of degree  
$k$ in $n$ dimensions.
\end{lemma}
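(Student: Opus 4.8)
The plan is to recognise the left-hand side as the reproducing (zonal) kernel of the finite-dimensional eigenspace $\mathcal{H}_k(S^n)$ of $\Delta_n$ and to exploit the $\SO(n+1)$-symmetry of $\Delta_n$. First I would put $F_k(x,y) := \sum_{j=1}^{d_k(n)} \psi_{k,j}(x)\,\overline{\psi_{k,j}(y)}$ and note that $F_k$ does not depend on the chosen orthonormal basis: if $\phi_{k,j} = \sum_l U_{jl}\psi_{k,l}$ with $U$ unitary, then $\sum_j U_{jl}\overline{U_{jm}} = \delta_{lm}$ collapses the resulting double sum back to $F_k$. Equivalently, $F_k(\cdot,y)$ is the integral kernel of the orthogonal projection $P_k$ of $L^2(S^n)$ onto $\mathcal{H}_k(S^n)$, which is canonically attached to $\Delta_n$ via its eigenspace decomposition. (Here I read the index on the right-hand factor in the statement as $j$, so that the left-hand side is the diagonal reproducing kernel.)

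Next I would use invariance. Each $R\in\SO(n+1)$ acts by the isometry $(R\cdot\psi)(x)=\psi(R^{-1}x)$ of $L^2(S^n)$, and since $\Delta_n$ commutes with this action, $R$ maps $\mathcal{H}_k(S^n)$ onto itself; hence $\{R\cdot\psi_{k,j}\}$ is again an orthonormal basis, and basis-independence gives $F_k(R^{-1}x,R^{-1}y)=F_k(x,y)$ for every $R$. Because $\SO(n+1)$ acts transitively on pairs of unit vectors with a prescribed inner product, $F_k(x,y)$ depends only on $t=x\cdot y$; write $F_k(x,y)=f_k(x\cdot y)$, a polynomial of degree at most $k$ in $t$ (it is a zonal spherical harmonic of degree $k$). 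Fixing $y$, the function $x\mapsto f_k(x\cdot y)$ equals $\sum_j \overline{\psi_{k,j}(y)}\,\psi_{k,j}(x)$, hence lies in $\mathcal{H}_k(S^n)$ and is an eigenfunction of $\Delta_n$ with eigenvalue $k(k+n-1)$. Placing $y$ at the pole so that $x\cdot y=\cos\theta$ and inserting this ansatz into the polar expression \kak{splap} for $\Delta_n$ collapses the eigenvalue equation to an ordinary differential equation in $t$; I would identify it as the Gegenbauer equation of index $\frac{n-1}{2}$ (the higher-dimensional analogue of \kak{gph1}), whose only polynomial solution of degree $\le k$ is, up to a multiplicative constant, $P_k^{(n-1)/2}$. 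Thus $f_k=c_k\,P_k^{(n-1)/2}$.

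Finally I would pin down $c_k$ by a trace computation. Setting $x=y$ gives $F_k(x,x)=f_k(1)=c_k\,P_k^{(n-1)/2}(1)=c_k$ under the standard normalisation $P_k^{(n-1)/2}(1)=1$; in particular $F_k(x,x)$ is constant on $S^n$. Integrating over $S^n$ and using orthonormality,
\[
c_k\,|S^n| = \int_{S^n} F_k(x,x)\,\d V_g(x) = \sum_{j=1}^{d_k(n)} \int_{S^n} |\psi_{k,j}(x)|^2\,\d V_g(x) = d_k(n),
\]
so that $c_k=d_k(n)/|S^n|$ and the claimed identity \kak{gph11} follows.

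The step I expect to be the main obstacle is the middle one: verifying that substituting the zonal ansatz $f_k(\cos\theta)$ into \kak{splap} really does reduce $\Delta_n f = k(k+n-1)f$ to the Gegenbauer ordinary differential equation of the correct index $\frac{n-1}{2}$, and that the degree bound then forces $f_k$ to be a scalar multiple of $P_k^{(n-1)/2}$. The basis-independence and $\SO(n+1)$-invariance steps are conceptually clean but must be stated carefully, since they are precisely what licenses the reduction to one variable; the final constant-fitting is routine.
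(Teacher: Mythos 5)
Your proof is correct and is the standard argument for the addition formula. The paper itself does not prove this lemma -- it simply refers the reader to Morimoto \cite{MMT} -- so there is no in-paper proof to compare against, but what you wrote is essentially the argument found there and in other standard references (reproducing kernel of the eigenspace projection, $\SO(n+1)$-invariance forcing a zonal form, reduction of the eigenvalue equation to the Gegenbauer ODE, and normalisation by a trace computation). You also correctly read through the index typo in \kak{gph11}: the right-hand factor should be $\bar{\psi}_{k,j}(y)$ rather than $\bar{\psi}_{k,l}(y)$, so that the sum is over the repeated index $j$ and the left-hand side is indeed the reproducing kernel of $\mathcal{H}_k(S^n)$. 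The step you flagged as the likely obstacle is in fact routine: for $f$ depending only on $\theta$, the radial part of \kak{splap} with $u=\cos\theta$ becomes $(1-u^2)f''(u)-nu\,f'(u)$, so the eigenvalue equation is $(1-u^2)f''-nu\,f'+k(k+n-1)f=0$, which is the ultraspherical (Gegenbauer) equation of parameter $\lambda=\tfrac{n-1}{2}$ since $2\lambda+1=n$ and $k(k+2\lambda)=k(k+n-1)$; its unique polynomial solution (up to scalar) of degree at most $k$ is $P_k^{(n-1)/2}$. Your final computation $\int_{S^n}F_k(x,x)\,\d V_g(x)=d_k(n)$ then pins down $c_k=d_k(n)/|S^n|$ under the normalisation $P_k^{(n-1)/2}(1)=1$ that the stated formula implicitly uses.
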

For proof, one may see Morimoto \cite{MMT}. Note in particular, this means that 
$P_{k}^{(n-1)/2}(x\cdot y)$ is a harmonic function on $S^n$  with eigenvalue 
$\lambda_k= k(k+n-1)$.  

The Gegenbauer polynomials enable one to write the heat kernel on $S^{n}$ 
explicitly namely, for all $t > 0$, and $x, y \in S^{n}$: 
\begin{eqnarray}\label{gph9}
K(t,x,y) &:=&\frac{1}{V_{n}}\sum _{k = 0}^{\infty} \sum _{j = 1}^{d_{k}(n)} 
\e ^{-k(k + n - 1)t} \psi _{k,j}(x) \bar{\psi }_{k,j}(y)\\
&=& \frac{1}{V}\sum _{k = 0}^{\infty} \e ^{-k(k + n - 1)t}\frac{d_{k}(n)}
{P_{k}^{\frac{(n -  1)}{2}}(1)}P_{k}^{\frac{(n - 1)}{2}}(x \cdot y).
\end{eqnarray}
where $V_{n}$ is the volume of $S^{n},$ and 
$d_{k}(n)$ is the dimension of the $\lambda_k$ eigenspace. 
It is also known that the zeta kernel $\zeta _{S^{n}} (s, x, y)$ on $S^{n}$ is 
explicitly given by 
\begin{equation}\label{zetak}
\zeta _{S^{n}} (s, x, y) = \frac{1}{V_{n}} \sum _{k = 1}^{\infty} 
\frac{d_{k}(n)}{(k(k+ n - 1))^{s}} \cdot  \frac{1}{P_{k}^{\frac{(n - 1)}{2}}(1)}
P_{k}^{\frac{(n - 1)}{2}}(x \cdot y)
\end{equation}
(see e.g \cite{MMT}).

We now consider the Schr$\ddot{o}$dinger operator $\Delta _{g} + c $ where  $c = \frac{n-1}{2}$ 
and $n$ is the dimension of the unit sphere $S^{n}.$  One expresses the associated spectral zeta 
function in terms of the Hurwitz zeta function. Denote by $\{\mu _{k} \}$ the spectrum of 
$\Delta _{g} + c $  on  $S^{n}:$
\begin{equation}
\mu _{k} =  k(k+ n -1) + c
\end{equation}
with the same eigenfunctions and multiplicities, $d _{k}(n)$  as for $\DEL_{g}$. 

We define the regularized zeta function as 
\begin{equation}\label{spec3c}
Z_{S^{n}}(s) = \sum_{k=1}^{\infty}  \frac{d_{k}(n) }{\mu _{k}^{s}}  =
\sum_{k=1}^{\infty}  \frac{d_{k}(n) }{(k + \frac{n - 1}{2})^{ 2s}}    
;~~~~\Re (s)>\frac{n}{2} .
\end{equation}

The regularized zeta function  (\ref{spec3c}) of the  operator $\DEL _{g} + c$  
on $S^{n}$ can  then  be  expressed in terms of  the Riemann zeta function following 
Elizalde and others e.g. \cite{EE}. 
For $n = 1,$  the regularized zeta function on the unit circle becomes 
\[Z_{S^{1}} (s) =  \sum_{k=1}^{\infty}  \frac{2}{k ^{ 2s}} = 2 \zeta _{R} (2 s)\] 
since $d_{k}(1) = 2 .$
On $S^{2} $  and $S^{3} $ the spectra are shifted by $\frac{1}{4}$   and $1$ with 
$d_{k}(2) = 2k + 1 $ and $d_{k}(3) = (k + 1)^{2} $ respectively. Continuing this way gives
\begin{eqnarray*}
  Z_{S^{2}} (s) &=&  \sum_{k=1}^{\infty}  \frac{2k + 1}{(k + \frac{1}{2} )^{ 2s}}
  = (2^{2 s} - 2 )\zeta _{R} (2 s - 1)   - 4^{s} ;\\
   Z_{S^{3}} (s) &=& \sum_{k=1}^{\infty}  \frac{(k + 1)^{2}}{(k + 1)^{ 2s}}
 = \zeta _{R} (2 s - 1)   - 1 ;\\
 Z_{S^{4}} (s) &=& \frac{1}{6} \sum_{k=1}^{\infty}  \frac{(k + 1)(k + 2)(2k + 3)}{(k 
+ \frac{3}{2})^{ 2s}}\\
&=& \frac{1}{3}(2^{2 s - 3} - 1 )\zeta _{R} (2 s - 3)  
 - \frac{1}{3}(2^{2 s - 3} - \frac{1}{4} )\zeta _{R} (2 s - 1) - \frac{1}{3} 
 (\frac{2}{3})^{2s - 3} + \frac{1}{8} (\frac{2}{3})^{2s}.
\end{eqnarray*}

Now we can write $Z _{S^{n}}(s)$ as 
\begin{equation}\label{mainn1}
Z _{S^{n}}(s, c) = \sum _{k = 0}^{\infty} \frac{1}{(k + c)^{2s}} ; 
\textrm{with}~~c = \frac{n-1}{2}~~
\textrm{for} ~~ \Re (s) > \frac{n}{2}.
\end{equation}
Clearly, $Z _{S^{n}}(s, 1) =  \zeta _{S^{n}}(2s) .$

Finally we prove that $Z _{S^{n}}(s, c) \prec \zeta _{S^{n}}(s) .$ 
This is the theorem that follows.
\begin{theorem}\label{mainn2}
Let $Z _{S^{n}}(s, \rho _{n})$ be the spectral zeta kernel of the 
Schr$\ddot{o}$dinger-type operator, $H = \Delta + c,$ on smooth functions of the 
$n$-dimensional unit sphere, $S^{n},$ where $c$ is the potential operator that multiplies 
by $\displaystyle{\rho _{n}=\frac{n-1}{2}}.$ Then 
\begin{equation}\label{mainn3}
Z _{S^{n}}(s, x,y) \prec \zeta _{S^{n}}(s,x,y) .
\end{equation}
\end{theorem}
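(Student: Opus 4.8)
The plan is to reduce (\ref{mainn3}) to a termwise comparison of the two spectral expansions and then read majorisation off Definition \ref{majorisation3}. First I would record the common eigenstructure on $S^n$: the Laplacian $\Delta$ has eigenvalues $\lambda_k=k(k+n-1)$ with multiplicity $d_k(n)$ and eigenfunctions the spherical harmonics $\{\psi_{k,j}\}$, while $H=\Delta+c$ with $c=\rho_n=\frac{n-1}{2}\ge 0$ has \emph{the same} eigenfunctions and the shifted eigenvalues $\mu_k=\lambda_k+\rho_n$, with the same multiplicities $d_k(n)$; this is exactly the data behind (\ref{spec3c}). Feeding these into Theorem (\ref{4}) and applying the addition formula (Lemma \ref{addf}) gives
\[
\zeta_{S^n}(s,x,y)=\frac{1}{|S^n|}\sum_{k\ge1}\frac{d_k(n)}{\lambda_k^{\,s}}\,\frac{P_k^{(n-1)/2}(x\cdot y)}{P_k^{(n-1)/2}(1)},
\]
and the identical expression with $\lambda_k$ replaced by $\mu_k$ for $Z_{S^n}(s,x,y)$. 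Thus the two kernels carry \emph{identical} angular factors and differ only through the coefficient sequences $a_k:=d_k(n)\lambda_k^{-s}$ and $b_k:=d_k(n)\mu_k^{-s}$; it is these that (\ref{mainn3}) compares.

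Next, since $\rho_n\ge 0$ one has $\mu_k=\lambda_k+\rho_n\ge\lambda_k>0$ for every $k\ge1$, so for real $s>\frac{n}{2}$ (the complex case then following by analytic continuation, or by passing to $|\lambda_k^{-s}|=\lambda_k^{-\Re s}$) we obtain the termwise bound $0\le b_k\le a_k$. A termwise domination of one nonnegative summable sequence by another forces weak majorisation: for each $m$ the sum of the $m$ largest $b_k$ equals $\sum_i b_{k_i}\le\sum_i a_{k_i}\le\sum_{j=1}^m a_j^{\downarrow}$, hence $\sum_{j=1}^m b_j^{\downarrow}\le\sum_{j=1}^m a_j^{\downarrow}$ for all $m$. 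That is precisely condition (\ref{majeq1}), giving $Z_{S^n}(s,x,y)\prec\zeta_{S^n}(s,x,y)$ in the sense of Definition \ref{majorisation3}; on the diagonal $x=y$, where $x\cdot y=1$ and every angular factor equals $1$, this is just blockwise domination of the positive series $\sum d_k(n)\mu_k^{-s}$ by $\sum d_k(n)\lambda_k^{-s}$.

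An equivalent route using the machinery already built: $H$ and $\Delta$ are simultaneously diagonalised, so $K_H(t,x,y)=\e^{-ct}K_\Delta(t,x,y)\le K_\Delta(t,x,y)$; the inequality also falls out of Duhamel's formula (Theorem \ref{hkm3}) on taking $Y$ to be multiplication by the constant $c\ge 0$, since the remainder $\int_0^t\e^{-(t-s)H}\,c\,\e^{-s\Delta}\,\d s$ has a nonnegative kernel. Because $0<\e^{-ct}\le 1$ for $t>0$, the Mellin representation of Lemma \ref{l2} (equation (\ref{zetaint}), whose $1/V$ term strips the constant mode from both kernels) yields $Z_{S^n}(s)\le\zeta_{S^n}(s)$ for real $s>\frac{n}{2}$ at once — sharper than (\ref{kat7}) in this scalar case — and, applied in each eigenspace, the kernel statement (\ref{mainn3}).

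The genuinely delicate point is not the inequality but the \emph{reading} of (\ref{mainn3}): the Gegenbauer factors $P_k^{(n-1)/2}(x\cdot y)$ change sign off the diagonal, so there is no pointwise ordering of the two kernels, and the claim must be phrased as majorisation of the (nonnegative, for real $s$) coefficient sequences, with the partial-sum inequalities (\ref{majeq1}) after decreasing rearrangement being — as shown above — automatic from termwise domination. A secondary bookkeeping point is the constant lowest mode, which is an honest eigenmode of $H$ but the excised null mode of $\Delta$: both regularised zeta kernels omit it by construction (sum from $k=1$, equivalently the $1/V$ subtraction), so the index sets match and no spurious term can disturb the ordering.
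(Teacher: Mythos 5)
Your argument is correct, and in fact it arrives at the decisive observation by a far more direct route than the paper does. The paper's proof passes through Kato's inequality (Theorem \ref{kkt1}) applied to the spherical harmonics, an integration against $\Delta^{-s}$, the trace-class identity (\ref{3}), a binomial expansion of $(1-z)^{-2s}$, and the Hurwitz zeta series, before finally concluding from the single line
\[
\frac{d_k(n)}{\mu_k^{\,s}} \preceq \frac{d_k(n)}{\lambda_k^{\,s}}
\]
and the Schur convexity of summation. You notice that, because the perturbation $c=\rho_n$ is a \emph{constant}, $H=\Delta+c$ and $\Delta$ are simultaneously diagonal on the spherical harmonics, so the two zeta kernels share the same angular (Gegenbauer) factors and differ only in the scalar coefficients $a_k=d_k(n)\lambda_k^{-s}$ versus $b_k=d_k(n)\mu_k^{-s}$; the termwise bound $0\le b_k\le a_k$ (from $\mu_k=\lambda_k+\rho_n\ge\lambda_k>0$) then forces weak majorisation by the standard rearrangement argument you spell out. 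Your alternative via $K_H(t,x,y)=\e^{-ct}K_\Delta(t,x,y)$ and the Mellin representation of Lemma \ref{l2} is also valid and, as you note, strictly sharpens (\ref{kat7}) for this scalar potential — a point the paper does not make. The Kato and Duhamel machinery, which the paper builds and invokes, does essentially no work here because the potential commutes with the Laplacian; it would only become essential for a genuinely non-commuting $V$.

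Two of your remarks are genuine improvements to the exposition. First, the Gegenbauer factors $P_k^{(n-1)/2}(x\cdot y)$ change sign off the diagonal, so (\ref{mainn3}) cannot be a pointwise inequality between kernels; it must be read, as you do, as a majorisation of the coefficient sequences (or, restricted to the diagonal $x=y$, as blockwise domination of positive series). The paper never makes this reading explicit. Second, your observation about the excised constant mode is the right bookkeeping, though the parenthetical ``equivalently the $1/V$ subtraction'' is slightly off for $H$: since the constant function is an eigenfunction of $H$ with nonzero eigenvalue $c$, the $1/V$ subtraction in (\ref{zetaint}) is not the same as dropping $k=0$ for $H$; the paper simply \emph{defines} $Z_{S^n}$ by summing from $k=1$, which is what makes the index sets match. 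One small caveat in your termwise argument: the inequality $\mu_k^{-s}\le\lambda_k^{-s}$ requires $s$ real and positive; you correctly flag the restriction to real $s>\frac n2$, and this is in fact the only regime in which the nonnegativity needed for majorisation is available, so the ``analytic continuation'' aside should be dropped — majorisation is a statement about real sequences and does not continue.
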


\begin{proof}
Let $\{\psi _{k,j}: 1 \leq j \leq  d_{k}(n)\}$ be an orthonormal basis of the
 space of  $n$-dimensional spherical harmonics  $\mathcal{H}_{k}(S^{n}).$ 
By Kato's inequality of Theorem \ref{kkt1} it suffices we use  
\begin{equation}\label{mainn4}
\lka sgn(\psi _{k,j})H \psi _{k,j}, \psi _{k,l}\rka \leq \lka |
\psi _{k,j}|,\Delta \psi _{k,l}\rka .
\end{equation}
So,
\begin{eqnarray*}
\lka sgn(\psi _{k,j})H \psi _{k,j}, \psi _{k,l}\rka &=&  
\lka sgn(\psi _{k,j})(\Delta _{x} + c) \psi _{k,j}, \psi _{k,l}\rka  \\
&=& \lka sgn(\psi _{k,j})\Delta _{x}\psi _{k,j}, \psi _{k,l}\rka  + 
\lka sgn(\psi _{k,j}) c \psi _{k,j}, \psi _{k,l}\rka \\
&=&
\lambda _{k}\lka sgn(\psi _{k,j})\psi _{k,j}, \psi _{k,l}\rka  + 
\rho _{n} \lka sgn(\psi _{k,j})\psi _{k,j}, \psi _{k,l}\rka \\
&=&  (\lambda _{k}+\rho _{n}) \lka sgn(\psi _{k,j})\psi _{k,j}, \psi _{k,l}\rka .
\end{eqnarray*}

Hence, 
\[ \int_{S^{n}} \Delta ^{-s} \lka sgn(\psi _{k,j})H \psi _{k,j}, \psi _{k,l}\rka \d V_{y} 
=  \int_{S^{n}} \Delta ^{-s}(\lambda _{k}+\rho _{n}) \lka sgn(\psi _{k,j}) \psi _{k,j},
 \psi _{k,l}\rka \d V_{y} .\]
Since $\Delta ^{-s}$ is trace class with trace given by \ref{3} we have 
\begin{eqnarray*}
 \int_{S^{n}} \Delta ^{-s} \lka sgn(\psi _{k,j})H \psi _{k,j}, \psi _{k,l}\rka \d  V_{y} 
&=&  \sum _{k=1}^{\infty} \frac{1}{(\lambda _{k}+\rho _{n})^{s}} 
\int_{S^{n}}  \lka sgn(\psi _{k,j}) \psi _{k,j}, \psi _{k,l}\rka \d  V_{y}  \\
&\leq & \sum _{k=1}^{\infty} \frac{1}{(\lambda _{k}+\rho _{n})^{s}} 
\int_{S^{n}}  \psi _{k,j} \bar{\psi} _{k,l} \d V_{y}  \\
&=& \sum _{k=1}^{\infty} \frac{1}{(\lambda _{k}+\rho _{n})^{s}} .
\end{eqnarray*}
However, for $|z| < 1$ the following binomial expansion holds
\[(1 - z)^{-2s} = \sum _{m = 0}^{\infty} \frac{\Gamma(2s + m)}{m!\Gamma(2s)} z^{m} .\]
So for $\Re (2s) > 1 ,$ we have 
\begin{eqnarray*}
\zeta _{H}(2s, \rho _{n}) &=& \frac{1}{\rho _{n}^{2s}} + \sum _{k = 1}^{\infty} 
\frac{1}{k ^{2s}} \frac{1}{(1 + \frac{\rho _{n}} {k})^{2s}} \\
& = & \frac{1}{\rho _{n}^{2s}} + \sum _{k = 1}^{\infty} \frac{1}{k ^{2s}}
\sum _{m = 0}^{\infty} (-1)^{m} \frac{\Gamma(2s + m)}{m!\Gamma(2s)}
(\frac{\rho _{n}}{k})^{m} \\
&= & \frac{1}{\rho _{n}^{2s}} + 
\sum _{m = 0}^{\infty} (-1)^{m} \frac{\Gamma(2s + m)}{m!\Gamma(2s)} (\rho _{n})^{m}
\sum _{k = 1}^{\infty} \frac{1}{k ^{2s + m}}
\end{eqnarray*}
which gives the expansion
\[Z_{S^{n}}(s, \rho _{n}) = \frac{1}{\rho _{n}^{2s}} + \sum _{m = 0}^{\infty} (-1)^{m}
 \frac{\Gamma(2s + m)}{m!\Gamma(2s)} \rho _{n}^{m} \zeta _{R}(2s + m)\]
 provided $0 < \rho _{n} \leq 1$ and where $\zeta _{R}$ is the Riemann zeta function.
 
 Thus, since 
 \[\displaystyle{\frac{d_{k}(n)\downarrow}{\mu ^{s}_{k}}} \preceq 
 \displaystyle{\frac{d_{k}(n)\downarrow}{\lambda ^{s}_{k}}}\]
  and summation operator is Schur convex (see e.g. \cite{ZS}), it follows 
 \[ Z_{S^{n}}(s,x,y) \leq \zeta_{g}(s,x,y). \]
 Therefore, 
 \[\sum_{k=1}^{\infty}  \frac{d_{k}(n) }{(k + \frac{n - 1}{2})^{ 2s}}    
\preceq \sum_{k=1}^{\infty}  \frac{d_{k}(n) }{(k(k+ n -1))^{s}}  ~~ \textrm{for}~~ \Re (s)>\frac{n}{2} .\]
\end{proof}

\section{Conclusion}
We have constructed Kato's bounds for the spectral zeta kernel of 
a  Schr$\ddot{o}$dinger-type operator in terms of the spectral zeta kernel of the 
Laplacian and Riemann zeta function on closed Riemannian manifolds.
 We proved that  $\tr\exp (-t H) \preceq \tr\exp (-t \Delta) $
for $H = \Delta _{g} + V$ on  smooth functions  on  $(M,g).$ 
Several illustrations were done on the $n$-dimensional unit sphere. A similar study can be 
 done on other Riemannian manifolds of higher genus and with boundary.

\section{Competing interests}
The authors declare that they have no competing interests.

\end{document}